
\documentclass[12pt]{amsart}
\usepackage{amssymb}
\usepackage[mathscr]{eucal}
\usepackage{epsf}
\usepackage{epsfig}
\usepackage{color}
\DeclareGraphicsExtensions{.pstex,.eps,.epsi,.ps}


\vfuzz2pt 

 \newtheorem{thm}{Theorem}[section]
 
 \newtheorem{lem}[thm]{Lemma}
 \newtheorem{prop}[thm]{Proposition}
 \theoremstyle{definition}
 
 \theoremstyle{remark}
 
 \numberwithin{equation}{section}

 \newcommand{\J}{\mathcal{J}}

 \newcommand{\Real}{\mathbb{R}}

 \newcommand{\tr}{\textbf{tr}}
 \newcommand{\di}{\textbf{div}}
 \newcommand{\vol}{\textbf{vol}}
 \newcommand{\ric}{\textbf{Rc}}

 \newcommand{\Rm}{\textbf{Rm}}

\newcommand{\LD}{\mathcal L}

\newcommand{\cJ}{\mathcal J}
\newcommand{\TW}{\overline\nabla}
\newcommand{\twR}{\overline{\Rm}}

\begin{document}

\title[Volume Rigidity of $S^1\to \mathcal P \to\mathbb{CP}^n$]{Volume Rigidity of Principal Circle Bundles over the Complex Projective Space}

\author{Paul W.Y. Lee}
\email{wylee@math.cuhk.edu.hk}
\address{Room 216, Lady Shaw Building, The Chinese University of Hong Kong, Shatin, Hong Kong}

\date{\today}

\begin{abstract}
In this paper, we prove that principal circle bundles over the complex projective space equipped with the standard Sasakian structures are volume rigid among all $K$-contact manifolds satisfying positivity conditions of tensors involing the Tanaka-Webster curvature.
\end{abstract}

\maketitle


\section{Introduction}

One of the main consequences of the classical Bishop-Gromov inequality states that a Riemannian manifold with the Ricci curvature bounded below by a positive constant has its volume controlled by the corresponding model which is the sphere of constant curvature. Moreover, equality of the volumes holds only if the manifold is isometric to the model. This result is also the starting point of the theory of almost rigidity (see \cite{ChCo} and reference therein).

Recently, there is a surge of interest in extending various comparison type results to the sub-Riemannian setting (see \cite{AgLe1,AgLe2,BaGa,BaBoGa,LeLiZe,Le1,Le2,BaGrKuTh}). However, extension of the above rigidity result to the sub-Riemannian setting seems to be missing. One of the purposes of this paper is to provide such a result for $K$-contact manifolds.

Recall that a contact metric manifold is a contact manifold $M$ equipped with a Riemannian metric $\left<\cdot,\cdot\right>$ and $(1,1)$-tensor $\cJ$ (which is a complex structure defined on the contact distribution) satisfying some compatibility conditions (see Section 2 for more details). It is $K$-contact if the Reeb vector field $\xi$ is Killing.

An example of a $K$-contact metric manifold is given by the Hopf fibration which is a principal bundle $S^1\to S^{2n+1}\to\mathbb{CP}^n$. There is a $K$-contact structure which make $S^{2n+1}$ a Sasakian manifold. Moreover, the quotient $\mathbb{CP}^n$ inherits from this Sasakian structure a K\"ahler structure which a constant multiple of the standard one given by the Fubini-Study metric. We denote the total space and the base of the Hopf fibration equipped with this structure by $\mathcal P(k,1)$ and $\mathcal B(k,1)$, respectively. Here $k$ is the number given by $\left<\Rm(\cJ X, X)X,\cJ X\right>=k^2$, where $\Rm$ is the Riemann curvature tensor of $\mathcal B(k,1)$ and $X$ is any unit tangent vector. Finally, let $\mathcal P(k,m)=\mathcal P(k,1)/\mathbb{Z}_m$, where $\mathbb{Z}_m$ acts on $\mathcal P(k,1)$ by a discrete subgroup of $S^1$ of order $m$. These are all the model spaces of our volume rigidity results. (See Section \ref{Hopf} for more details.)

In order to state the main result, we also need the Tanaka-Webster curvature $\overline{\Rm}$ which is the curvature of a connection defined in the CR case in \cite{Tana} and more general contact case in \cite{Tann}. The tensor $\mathcal N$ is defined by $\mathcal N(X)=P(\nabla_X\cJ)X$, where $P$ is the orthogonal projection onto the contact distribution and $\nabla$ is the Levi-Civita connection.

\begin{thm}\label{main}
Assume that the above assumptions hold and that the following curvature conditions are satisfied:
\begin{enumerate}
\item $\left<\overline{\Rm}(\cJ X, X)X,\cJ X\right>-|\mathcal N(X)|^2\geq k^2$ for all unit tangent vectors $X$,
\item $\tr_{\{X,\cJ X\}^\perp}(Y\mapsto\left<\overline{\Rm}(Y, X)X, Y\right>-|\mathcal N(X)|^2\geq \frac{(2n-2)k^2}{4}$ for all unit tangent vectors $X$.
\end{enumerate}
Then
\[
m:=\frac{\text{len}(\bar C)}{\text{len}(C)}\leq \frac{\vol(\mathcal P(k,1))}{\vol(M)},
\]
where $C$ and $\bar C$ are closed orbits of the Reeb fields on $M$ and $\mathcal P(k,1)$, respectively. Moreover, if equality holds, then $m$ is a positive integer and $M$ is isometrically contactomorphic to $\mathcal P(k,m)$.
\end{thm}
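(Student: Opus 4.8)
The plan is to run a Bishop--Gromov type comparison in which the radial variable is supplied by the horizontal (contact) directions while the Reeb direction contributes a length factor. Because $\mathcal P(k,1)\to\mathcal B(k,1)$ is a principal circle bundle whose fibers are the totally geodesic Reeb orbits, fiber integration gives $\vol(\mathcal P(k,1))=\mathrm{len}(\bar C)\cdot\vol(\mathcal B(k,1))$, and since $\mathrm{len}(\bar C)=m\cdot\mathrm{len}(C)$ the asserted inequality is equivalent to the single estimate $\vol(M)\le\mathrm{len}(C)\cdot\vol(\mathcal B(k,1))$. Thus it suffices to bound the Riemannian volume of $M$ by the product of one Reeb period with the volume of the complex projective space $\mathcal B(k,1)=\mathbb{CP}^n$ of holomorphic sectional curvature $k^2$. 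I would obtain this by expressing $\vol(M)$ through the Reeb flow together with a transverse horizontal exponential map: since $M$ is $K$-contact the Reeb field $\xi$ is Killing with geodesic orbits, so along the closed orbit $C$ its flow furnishes the $\mathrm{len}(C)$ factor, while the transverse volume is carried by horizontal geodesics whose density I will compare to that of the model base.

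Concretely, I would fix a point and split $T_xM=\mathbb R\xi\oplus\mathbb R\,\cJ X\oplus\{X,\cJ X\}^\perp$ along a unit horizontal direction $X$. Writing the transverse volume element as the Jacobian of the horizontal exponential map in these polar-type coordinates, the governing Jacobi (equivalently matrix Riccati) equation has curvature operator given precisely by the Tanaka--Webster curvature $\overline{\Rm}$ corrected by the tensor $\mathcal N$. Along the complex line $\{X,\cJ X\}$ the relevant scalar is $\langle\overline{\Rm}(\cJ X,X)X,\cJ X\rangle-|\mathcal N(X)|^2$, and along the orthogonal directions it is $\tr_{\{X,\cJ X\}^\perp}(Y\mapsto\langle\overline{\Rm}(Y,X)X,Y\rangle)-|\mathcal N(X)|^2$. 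Hypotheses (1) and (2) bound these from below by $k^2$ and $(2n-2)k^2/4$, which are exactly the corresponding curvature data of $\mathbb{CP}^n$, so the standard Riccati comparison forces the Jacobian of $M$ to be dominated termwise by the explicit Jacobian of the model base.

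Integrating this pointwise Jacobian bound over the horizontal indicatrix and the cut domain then yields $\vol(M)\le\mathrm{len}(C)\cdot\vol(\mathcal B(k,1))$, which is the desired estimate. For the rigidity statement I would argue that equality propagates: it forces equality in the Riccati comparison along almost every horizontal geodesic, hence pointwise equality in (1) and (2) and, in particular, $\mathcal N\equiv0$. Vanishing of $\mathcal N$ together with the $K$-contact hypothesis upgrades the structure on $M$ to Sasakian, while the saturated curvature identities pin the Tanaka--Webster curvature to that of constant holomorphic sectional curvature $k^2$. By the classification of complete Sasakian space forms the universal cover of $M$ is $\mathcal P(k,1)$; identifying the deck group then shows $m\in\mathbb Z_{>0}$ and produces an isometric contactomorphism $M\cong\mathcal P(k,1)/\mathbb Z_m=\mathcal P(k,m)$.

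I expect the principal difficulties to be twofold. First, deriving the exact horizontal Jacobi equation and checking that its curvature operator is precisely the combination of $\overline{\Rm}$ and $\mathcal N$ appearing in (1)--(2) requires careful bookkeeping of the torsion of the Tanaka--Webster connection and of the coupling between horizontal geodesics and the Reeb flow; moreover the horizontal cut locus is subtler than its Riemannian counterpart, so controlling the domain of integration is delicate. Second, in the equality case promoting pointwise curvature rigidity to a global isometric contactomorphism is the real crux: one must show that the horizontal exponential map descends to a Riemannian covering of $\mathcal B(k,1)$ and that the Reeb holonomy around $C$ realizes exactly the cyclic group $\mathbb Z_m$, so that $M$ is globally the stated quotient rather than merely locally modeled on it.
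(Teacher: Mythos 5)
Your proof of the volume inequality is essentially the paper's argument. The paper also runs a Heintze--Karcher type comparison for the tube around the closed Reeb orbit $C$: it derives a matrix Riccati equation for the Hessian of the distance to $C$ along the normal (horizontal) geodesics, identifies the relevant curvature entries as $\left<\overline{\Rm}(\cJ X,X)X,\cJ X\right>-|\mathcal N(X)|^2$ and the trace over $\{X,\cJ X\}^\perp$, and compares the Jacobian of the normal exponential map with the model. The one presentational difference is that the paper compares with the tube around $\bar C$ in the total space $\mathcal P(k,1)$ rather than with the base $\mathcal B(k,1)$ --- which matters, since a general $K$-contact $M$ has no global symplectic quotient --- and it proves monotonicity of $V(C,T)/\bar V(T)$, recovering your fiber-integration reduction by letting $T\to 0$ and $T=\pi/k$.

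The genuine gap is in your rigidity step. Equality in the integrated volume comparison forces equality in the Riccati inequalities only along the minimizing geodesics issuing from $C$; at an interior point $y=\exp_x(t_0v)$ this yields $\mathcal N(\dot\gamma(t_0))=0$ and the curvature identities for the single radial direction $\dot\gamma(t_0)$, not for every unit tangent vector at $y$. So you cannot conclude $\mathcal N\equiv 0$ or constant holomorphic Tanaka--Webster curvature pointwise, and the appeal to a classification of complete Sasakian space forms does not get off the ground. Even granting that classification, identifying the deck group as a cyclic subgroup of the Reeb circle --- which is what would make $m$ an integer --- is a separate nontrivial step, since finite groups of structure-preserving isometries of the model sphere need not lie in the Reeb circle. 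The paper avoids both problems by working directly with the rigid data: equality pins down $S(t)$, $A(t)$, $B(t)$ explicitly along all radial geodesics, from which the focal set $X$ at distance $\pi/k$ is shown to be a totally geodesic, $\cJ$-invariant, $\xi$-tangent submanifold onto which the exponential map restricted to the normal sphere $P_x$ is an $m$-fold covering (this is where the integrality of $m$ actually comes from, via the rotation of the radial vectors under the Reeb flow), and the isometric contactomorphism is then assembled by pasting the normal exponential maps of $C$ and $X$ and inducting on dimension through $X$. Some version of this direct construction, rather than a pointwise curvature classification, is needed to close the argument.
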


Here isometrically contactomorphic means there is an isometry between the two spaces which is also a contactomorphism. Remark that the curvature conditions appeared earlier in a related work of the author \cite{LeLiZe}.

We also remark that the existence of a closed orbit in a compact contact manifold is a long standing open problem called the Weinstein conjecture \cite{We} (see \cite[Chapter 2]{Bl} for a brief discussion and numerous references). In the case of $K$-contact manifolds, there are in fact multiple closed orbits (see \cite{Ru} and references therein).

A symplectic analogue of the above result follows immediately by considering the Boothby-Wang fibrations \cite{BoWa}. First, we recall that a contact manifold is regular if the flow of the corresponding Reeb vector field $\xi$ is regular. On a compact manifold, it means that each orbit of $\xi$ is closed. Assume that the contact manifold is regular. A result in \cite{BoWa} shows that one can multiply a positive function $f$ to the Reeb field $\xi$ such that $M$ is the total space of a principal circle bundle $\pi:M\to N$, called a Boothby-Wang fibration, with action defined by the flow of $f\xi$. Moreover, the base space $N$ of this bundle is an integral symplectic manifold. If $\omega$ is the symplectic form on $N$, then $\pi^*\omega$ is the exterior derivative of the new contact form $\eta$. Conversely, a result \cite{Ko} shows that any integral symplectic manifold is the base of a Boothby-Wang fibration. If $\cJ_N$ defines an almost complex structure with a compatible Riemannian metric $\left<\cdot,\cdot\right>^N$, then the lifts of these structures to $M$ together with $\eta$ define a $K$-contact manifold (see \cite{Bl} and references therein for further details).

Let $M$ be an integral symplectic manifold equipped with an almost complex structure $\cJ$ and a compatible Riemannian metric $\left<\cdot,\cdot\right>$. The following result is a consequence of the above discussion and Theorem \ref{main} (the same notations are used for both the symplectic and the contact case though it will be clear from the context which case we are in).

\begin{thm}\label{mains}
Assume that the following curvature conditions hold:
\begin{enumerate}
\item $\left<\Rm(\cJ X, X)X,\cJ X\right>-|\mathcal N(X)|^2\geq k^2$ for all unit tangent vectors $X$,
\item $\tr_{\{X,\cJ X\}^\perp}(Y\mapsto\left<\Rm(Y, X)X, Y\right>-|\mathcal N(X)|^2\geq \frac{(2n-2)k^2}{4}$ for all unit tangent vectors $X$,
\end{enumerate}
where $\mathcal N$ is the tensor defined by $\mathcal N(X)=(\nabla_X\cJ)X$.

Then the volume of $M$ is bounded above by that of $\mathcal B(k)$. Moreover, equality holds only if $M$ is isometrically symplectic to $\mathcal B(k)$.
\end{thm}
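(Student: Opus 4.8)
The plan is to deduce Theorem~\ref{mains} from Theorem~\ref{main} by realizing the integral symplectic manifold $M$ as the base of a Boothby--Wang fibration, applying the contact result to the total space, and pushing the volume comparison back down to $M$. By the result of \cite{Ko} recalled above, since $M$ is integral symplectic there is a principal circle bundle $\pi:Q\to M$ with a contact form $\eta$ satisfying $d\eta=\pi^*\omega$; lifting $\cJ$ and $\left<\cdot,\cdot\right>$ horizontally and taking the fibre direction as the Reeb field makes $Q$ a $K$-contact manifold of the type considered above. Thus $Q$ is an admissible total space for Theorem~\ref{main}, and it remains to (i) match hypotheses (1)--(2) of Theorem~\ref{mains} on $M$ with hypotheses (1)--(2) of Theorem~\ref{main} on $Q$, (ii) convert the resulting bound on $\vol(Q)$ into one on $\vol(M)$, and (iii) trace the equality case back to $M$.

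The key geometric input, and the step I expect to be the main obstacle, is the identification of the Tanaka--Webster geometry of $Q$ with the Riemannian geometry of $M$. First I would note that $\pi:Q\to M$ is a Riemannian submersion whose fibres are the closed Reeb orbits, which are one-dimensional and totally geodesic, so O'Neill's formalism applies with vanishing second-fundamental-form tensor. The essential point is that the Tanaka--Webster connection $\TW$ on $Q$ is exactly the connection from which the O'Neill $A$-tensor contribution has been subtracted, so that $\TW$ is the horizontal lift of the Levi--Civita connection $\nabla^M$ of $M$: for horizontal lifts $\widetilde X,\widetilde Y$ one has $\TW_{\widetilde X}\widetilde Y=\widetilde{\nabla^M_X Y}$. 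Consequently the Tanaka--Webster curvature of $Q$ is the horizontal lift of the Riemann curvature of $M$,
\[
\left<\twR(\cJ\widetilde X,\widetilde X)\widetilde X,\cJ\widetilde X\right>=\left<\Rm(\cJ X,X)X,\cJ X\right>\circ\pi,
\]
and the corresponding traced expression in hypothesis (2) matches likewise. Applying the same correspondence to $\cJ$, whose horizontal lift is the contact complex structure, shows that the tensor $\mathcal N(X)=P(\nabla_X\cJ)X$ on $Q$ projects to the tensor $\mathcal N(X)=(\nabla^M_X\cJ)X$ on $M$, so $|\mathcal N|^2$ agrees on corresponding vectors. Verifying these identities precisely, in particular checking that the discrepancy between the Levi--Civita and Tanaka--Webster connections does not contaminate the horizontal components entering $\mathcal N$, is the delicate computational core; granting it, hypotheses (1)--(2) on $M$ are literally hypotheses (1)--(2) on $Q$.

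With the hypotheses in place, Theorem~\ref{main} yields
\[
\frac{\text{len}(\bar C)}{\text{len}(C)}\leq\frac{\vol(\mathcal P(k,1))}{\vol(Q)},
\]
where $C$ and $\bar C$ are closed Reeb orbits on $Q$ and $\mathcal P(k,1)$. Because both are Riemannian submersions by isometries with totally geodesic circle fibres of constant length, integrating along the fibres gives $\vol(Q)=\text{len}(C)\,\vol(M)$ and $\vol(\mathcal P(k,1))=\text{len}(\bar C)\,\vol(\mathcal B(k))$, where $\mathcal B(k)$ is the common base $\mathcal B(k,1)$. Substituting and cancelling the fibre-length factors collapses the inequality to $\vol(M)\leq\vol(\mathcal B(k))$, the desired bound.

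For the rigidity statement, equality $\vol(M)=\vol(\mathcal B(k))$ forces equality in the displayed inequality, namely $\text{len}(\bar C)/\text{len}(C)=\vol(\mathcal P(k,1))/\vol(Q)$, which is exactly the equality case of Theorem~\ref{main}; hence $m$ is a positive integer and $Q$ is isometrically contactomorphic to $\mathcal P(k,m)$. An isometric contactomorphism preserves the Reeb field and the contact distribution, so it descends to the base quotients; since $\mathcal P(k,m)$ again fibres over $\mathcal B(k)$, this produces an isometry $M\to\mathcal B(k)$. Finally, as the symplectic forms are recovered from the contact forms via $d\eta=\pi^*\omega$, the descended isometry intertwines them, so $M$ is isometrically symplectic to $\mathcal B(k)$, completing the argument.
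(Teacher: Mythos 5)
Your proposal is correct and follows essentially the same route as the paper, which offers no separate proof of Theorem~\ref{mains} beyond asserting that it is ``a consequence of the above discussion and Theorem~\ref{main}'' --- i.e.\ exactly the Boothby--Wang lift, the matching of the base Riemann curvature with the Tanaka--Webster curvature of the total space (consistent with Proposition~\ref{bRm}(5) with $h=0$ and O'Neill's formula), and the fibrewise integration of volumes that you carry out. Your write-up simply supplies the details the paper leaves implicit.
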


Note that comparison results for K\"ahler manifolds are well-studied (see \cite{LiWa} and references therein) but not in the symplectic case.

The organization of the paper is as follows. In section \ref{subintro}, we recall various notions on contact metric manifolds and the corresponding sub-Riemannian geodesic flows needed in this paper. In section \ref{Hopf}, we give a brief discussion on the model spaces, circle bundles over $\mathbb{CP}^n$. Section \ref{SMyers} is devoted to the proof of some Myers' type maximal diameter theorems. In Section \ref{SClosedOrbit}, we prove a few comparison type results for a closed Reeb orbit in the spirit of \cite{HeKa}. The equality case of these estimates is discussed in Secton \ref{SEqual}. Finally, we finish the proof of Theorem \ref{main} in Section \ref{endproof}. The appendix summarizes several formulas relating the structures defined on the contact metric manifolds which are needed in this paper.

\smallskip

\section{Contact Manifolds and their Sub-Riemannian Geodesics}
\label{subintro}
In this section, we recall several notions about contact manifolds and the sub-Riemannian geodesics which are needed for this paper (see \cite{Bl} and references therein for more details about Riemannian geometry of contact manifolds and see \cite{Mo} for sub-Riemannian geometry). Let $M$ be a $2n+1$ dimensional manifold equipped with a contact form $\eta$ (i.e. $d\eta$ is non-degenerate on $\ker\eta$). Let $\xi$ be the corresponding Reeb field $\xi$ defined by the conditions $\eta(\xi)=1$ and $d\eta(\xi,\cdot)=0$. A smoothly varying inner product defined on $\ker\eta$ is a called a Carnot-Caratheodory or sub-Riemannian metric $\left<\cdot, \cdot\right>$ on $\ker\eta$. This can be extended to a Riemannain metric, denoted by the same symbol $\left<\cdot, \cdot\right>$, by the conditions $\left<\xi, X\right>=0$ and $|\xi|=1$ for all $X$ in $\ker\eta$. A $(1,1)$-tensor $\cJ$ together with $\xi$, $\eta$, and the Riemannian metric $\left<\cdot,\cdot\right>$ is an contact metric structure if $\cJ\xi=0$, $\cJ^2(X_1)=-X_1$, $\left<\cJ X_1,\cJ X_2\right>$, and $d\eta(X_1,X_2)=\left<X_1,JX_2\right>$ for all $X_1$ and $X_2$ in $\ker\eta$. Finally, let $h$ be the $(1,1)$-tensor defined by $h=\LD_\xi \cJ$. A contact metric manifold is $K$-contact if $\xi$ is an isometry and this is equivalent to $h=0$. It is a CR manifold if
\[
\nabla_u \cJ(v)=\frac{1}{2}\left<u+hu,v\right>\xi-\frac{1}{2}\left<\xi,v\right>(u+hu).
\]
A $K$-contact CR manifold is Sasakian.

By the Chow-Rashevskii Theorem \cite{Chow,Ra}, any two points can be connected by a horizontal curve (i.e. a curve tangent to $\ker\eta$). The length of the shortest horizontal curve connecting two points $x_0$ and $x_1$ in $M$ is called the Carnot-Caratheodory or sub-Riemannian distance between $x_0$ and $x_1$. It is denoted by $d_{CC}(x_0,x_1)$. The function $g(\cdot)=d_{CC}(x_0,\cdot)$ is locally semi-concave outside the diagonal \cite{CaRi} and so it is twice differentiable Lebesgue almost everywhere \cite{EvGa,CaSi}. Moreover, since there is no abnormal minimizer (see \cite{Mo} for more detail), the function $g$ is $C^\infty$ along a sub-Riemannian geodesic except at the end-points.

The function $g$ satisfies the equation
\begin{equation}\label{eikonal}
|\nabla_H g(x)|=1
\end{equation}
for each $x$ where $g$ is differentiable. The vector field $\nabla_Hg$ is the horizontal gradient of $g$ which is defined as the orthogonal projection of the gradient vector field $\nabla g$ onto the distribution $\ker\eta$. Here the gradient is taken with respect to the Riemannian metric defined above. Moreover, if $\gamma$ is a geodesic which starts from $x$ and ends at $x_0$, then \begin{equation}\label{geograd}
\dot\gamma(t)=-\nabla_Hg(\gamma(t)).
\end{equation}
A computation using this relation gives

\begin{lem}\label{geo}
A sub-Riemannian geodesic $\gamma$ satisfies the following system of equations:
\[
\begin{split}
&\frac{D^2}{dt^2}\gamma(t)=a(t) \cJ\dot\gamma(t) -\frac{h(\dot\gamma(t), \cJ\dot\gamma(t))}{2}\xi(\gamma(t)),
\end{split}
\]
\[
\dot a(t)=\frac{1}{2}\left<h(\dot\gamma(t)), \cJ\dot\gamma(t)\right>,
\]
where $\frac{D}{dt}$ denotes the covariant derivative of the Riemannian metric defined above.
\end{lem}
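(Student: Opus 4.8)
The plan is to convert the first-order eikonal description of geodesics into the stated second-order system, using the gradient relation (\ref{geograd}). I would work along a minimizing geodesic $\gamma$ ending at $x_0$, where the function $g=d_{CC}(x_0,\cdot)$ is smooth (away from the endpoints), and set $u=\nabla_H g$, so that (\ref{geograd}) gives $\dot\gamma=-u$ and therefore $\frac{D^2}{dt^2}\gamma=\Lev{t}\dot\gamma=\nabla_u u$. Decomposing the horizontal gradient as $u=\nabla g-(\xi g)\xi$ reduces the whole computation to evaluating $\nabla_u\nabla g$, $\nabla_u\xi$, and the scalar $u(\xi g)$.

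The crucial step is to pin down the a priori unknown Hessian term $\nabla_u\nabla g$ from the eikonal constraint alone, which in this notation reads $\left<\nabla g,\nabla g\right>-(\xi g)^2=1$. Differentiating this identity in an arbitrary direction $w$, using the symmetry of the Hessian in the form $\left<\nabla_w\nabla g,u\right>=\left<\nabla_u\nabla g,w\right>$ together with $\left<\xi,\nabla_w\xi\right>=0$, I would obtain
\[
\left<\nabla_u\nabla g,w\right>=(\xi g)\left<u,\nabla_w\xi\right>.
\]
Inserting the structure equation $\nabla_w\xi=-\cJ w-\frac{1}{2}\cJ h w$ for the Reeb field (recorded in the appendix) and using that $\cJ$ is skew-symmetric on $\ker\eta$, that $h$ is symmetric, and that $\cJ h=-h\cJ$, the right-hand side collapses, giving $\nabla_u\nabla g=(\xi g)\cJ u-\frac{1}{2}(\xi g)\cJ h u$.

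Assembling $\nabla_u u=\nabla_u\nabla g-u(\xi g)\xi-(\xi g)\nabla_u\xi$, the two terms proportional to $\cJ h u$---one from $\nabla_u\nabla g$ and one from $-(\xi g)\nabla_u\xi$---cancel, so the horizontal part of $\nabla_u u$ is a pure multiple of $\cJ u$. Setting $a:=-2\,\xi g$ along $\gamma$ and rewriting $\cJ u=-\cJ\dot\gamma$ then reproduces the first equation. The vertical part comes from $-u(\xi g)\xi$, and a short computation of $u(\xi g)=\left<\nabla_u\nabla g,\xi\right>+\left<\nabla g,\nabla_u\xi\right>=\frac{1}{2}\left<hu,\cJ u\right>$, using the same algebraic facts, produces exactly the coefficient $-\frac{1}{2}\left<h\dot\gamma,\cJ\dot\gamma\right>$ of $\xi$.

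The equation for $a$ follows by differentiating its definition along the geodesic: $\dot a=-2\,\dot\gamma(\xi g)=2\,u(\xi g)$, which by the computation above is a scalar multiple of $\left<h\dot\gamma,\cJ\dot\gamma\right>$ and so yields the second identity. The one delicate point is the second paragraph---reducing the unknown Hessian $\nabla_u\nabla g$ to an expression purely in $\nabla\xi$, and hence in $\cJ$ and $h$, by playing the eikonal constraint against the symmetry of the Hessian, and then executing the tensorial simplification with the correct signs and normalization of $h$. Once that identity is in hand, the cancellation yielding the clean horizontal acceleration and the one-line differentiation giving $\dot a$ are routine.
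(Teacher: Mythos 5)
Your strategy coincides with the paper's: differentiate the eikonal identity $|\nabla_H g|^2=1$ once, use the symmetry of $\nabla^2 g$ to express $\nabla_{\nabla_H g}\nabla g$ in terms of $\nabla\xi$, then differentiate the relation $\dot\gamma=-\nabla_H g$ and substitute. Your identity $\left<\nabla_u\nabla g,w\right>=(\xi g)\left<u,\nabla_w\xi\right>$ is exactly the content of the paper's first display, and the cancellation of the $\cJ h$-terms via $\cJ h+h\cJ=0$ is the same mechanism the paper exploits.

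There is, however, a concrete error. You quote the structure equation as $\nabla_w\xi=-\cJ w-\frac{1}{2}\cJ hw$, whereas Proposition \ref{h}(4) gives $\nabla_w\xi=-\frac{1}{2}(\cJ w+\cJ hw)$. With the correct coefficient one finds $\nabla_u\nabla g=\frac{\xi g}{2}(\cJ u+h\cJ u)$ and $-(\xi g)\nabla_u\xi=\frac{\xi g}{2}(\cJ u+\cJ hu)$, so the horizontal part of $\nabla_u u$ is $(\xi g)\cJ u$ and the right normalization is $a=-\xi g=-\left<\nabla g,\xi\right>$, which is also the $a$ used throughout the rest of Section \ref{subintro}. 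Your doubled first term instead forces $a=-2\,\xi g$, and then $\dot a=2\,u(\xi g)=\left<h\dot\gamma,\cJ\dot\gamma\right>$, which is twice the stated $\frac{1}{2}\left<h\dot\gamma,\cJ\dot\gamma\right>$; the phrase ``a scalar multiple of $\left<h\dot\gamma,\cJ\dot\gamma\right>$'' conceals a genuine factor-of-two inconsistency between your two equations. The vertical component is unaffected, since the offending term pairs to zero against $u$, so $u(\xi g)=\frac{1}{2}\left<h\dot\gamma,\cJ\dot\gamma\right>$ stands; once the coefficient in $\nabla_w\xi$ is corrected, your argument closes with exactly the stated constants.
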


\begin{proof}
By differentiating (\ref{eikonal}) and applying Proposition \ref{h},
\[
\begin{split}
&0=\left<\nabla^2g(\nabla g),X\right>-\left<\nabla g,\xi\right>\left<\nabla^2g(\xi),X\right>-\left<\nabla g,\xi\right>\left<\nabla g,\nabla_X\xi\right>\\
&=\left<\nabla^2g(\nabla_H g),X\right>-\frac{\left<\nabla g,\xi\right>}{2}\left<\cJ\nabla g+h\cJ\nabla g,X\right>.
\end{split}
\]

By differentiating (\ref{geograd}) and applying Proposition \ref{h},
\[
\begin{split}
&\frac{D^2}{dt^2}\gamma=-\nabla^2g(\dot\gamma(t)) +\left<\nabla^2g(\dot\gamma(t)),\xi\right>_{\gamma(t)}\xi(\gamma(t))\\
& +\left<\nabla g,\nabla_{\dot\gamma(t)}\xi\right>_{\gamma(t)}\xi(\gamma(t)) +\left<\nabla g,\xi\right>_{\gamma(t)}\nabla_{\dot\gamma(t)}\xi(\gamma(t))\\
&=-\frac{\left<\nabla g,\xi\right>_{\gamma(t)}}{2}(\cJ +h\cJ)\dot\gamma(t) \\
&+\frac{1}{2}\left<\nabla g,\cJ h\nabla g\right>_{\gamma(t)}\xi(\gamma(t)) -\frac{1}{2}\left<\nabla g,\xi\right>_{\gamma(t)}(\cJ+\cJ h)\dot\gamma(t)\\
&=-\left<\nabla g,\xi\right>_{\gamma(t)}\cJ\dot \gamma(t) -\frac{1}{2}\left<\nabla g,h\cJ \nabla g\right>_{\gamma(t)}\xi(\gamma(t))
\end{split}
\]
The first assertion follows with $a(t)=-\left<\nabla g,\xi\right>_{\gamma(t)}$. The second one follows from
\[
\begin{split}
&\frac{d}{dt}\left<\nabla g,\xi\right>_{\gamma(t)} =\left<\nabla^2 g(\dot\gamma(t)),\xi\right>_{\gamma(t)}+\left<\nabla g,\nabla_{\dot\gamma(t)}\xi\right>_{\gamma(t)} =\frac{1}{2}\left<\dot\gamma(t),\cJ h \dot\gamma(t)\right>.
\end{split}
\]
\end{proof}

Next, we define a family of orthonormal frames along a sub-Riemannian geodesic.

\begin{lem}
There is a family of orthonormal frames
\[
v(t)=(v_0(t),v_1(t), v_2(t), ..., v_{2n}(t))^T
\]
defined along the geodesic $t\mapsto\gamma(t)$ which span the orthogonal complements of $\dot\gamma(t)$ such that $v_0(t)=\xi(\gamma(t))$, $v_1(t)=\dot\gamma(t)$, $v_2(t)=\cJ\dot\gamma(t)$, and
\[
\dot v(t)=W(t)v(t),
\]
where
\begin{enumerate}
\item $a(t)=-\left<\nabla g,\xi\right>_{\gamma(t)}$,
\item $H_{ij}(t)=\left<h(v_i(t)), v_j(t)\right>$,
\item $N_i(t)=\left<(\nabla_{v_1(t)}\cJ)v_1(t), v_i(t)\right>$,
\item $W(t)=\left(\begin{array}{cc}
W_1(t) & W_2(t)U(t)^T\\
-U(t)W_2(t)^T & O
\end{array}\right)$,
\item $U(t)$ is a family of $(2n-2)\times(2n-2)$ orthogonal matrices,
\item $W_1(t)=\left(\begin{array}{ccc}
0 & \frac{H_{12}(t)}{2} & -\frac{1+H_{11}(t)}{2}\\
-\frac{H_{12}(t)}{2} & 0 & a(t)\\
\frac{1+H_{11}(t)}{2} & -a(t) & 0
\end{array}\right)$,
\item $W_2(t)=\left(\begin{array}{ccc}
\frac{H_{32}(t)}{2} & ... & \frac{H_{2n, 2}(t)}{2}\\
0 & ... & 0\\
N_3(t) & ... & N_{2n}(t)
\end{array}\right)$.
\end{enumerate}
\end{lem}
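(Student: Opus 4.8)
The plan is to build the frame in two stages: first fix the three distinguished directions $v_0=\xi$, $v_1=\dot\gamma$, $v_2=\cJ\dot\gamma$ dictated by the geodesic, and then complete it by a normal frame of the orthogonal complement of $E=\spn\{v_0,v_1,v_2\}$ chosen so that the lower-right block vanishes. First I would check that $v_0,v_1,v_2$ are orthonormal: $v_0=\xi$ is a unit vector orthogonal to $\ker\eta$ by construction; $v_1=\dot\gamma$ is horizontal (since $\dot\gamma=-\nabla_Hg$ by (\ref{geograd})) and of unit length by the eikonal equation (\ref{eikonal}); and $v_2=\cJ v_1$ is a unit horizontal vector orthogonal to both $\xi$ and $v_1$ because $\cJ\xi=0$, $\left<\cJ v_1,v_1\right>=0$ and $\cJ$ preserves the metric on $\ker\eta$. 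I would also record the elementary fact that $\cJ$ preserves $E^\perp$, so that $E^\perp$ is a $(2n-2)$-dimensional $\cJ$-invariant subbundle along $\gamma$.

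For the complement I would take $v_3,\dots,v_{2n}$ to be an orthonormal frame of $E^\perp$ that is parallel for the connection induced by $\frac{D}{dt}$ on the normal bundle $E^\perp$, i.e. $P^\perp\frac{D}{dt}v_i=0$ for $i\geq 3$, where $P^\perp$ is the orthogonal projection onto $E^\perp$. Such a frame exists by solving the corresponding linear ODE along $\gamma$, and metric-compatibility of the induced connection keeps it orthonormal; the residual rotational freedom in this choice is exactly the family of orthogonal matrices $U(t)$ in the statement. By construction $\left<\frac{D}{dt}v_i,v_j\right>=0$ for all $i,j\geq 3$, which is precisely the assertion that the lower-right block of $W(t)$ is $O$.

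The core of the argument is then the computation of the three covariant derivatives $\frac{D}{dt}v_0$, $\frac{D}{dt}v_1$, $\frac{D}{dt}v_2$ and their expansion in the frame. The middle one is immediate from Lemma \ref{geo}: $\frac{D}{dt}v_1=\frac{D^2}{dt^2}\gamma=a\,v_2-\tfrac12 H_{12}\,v_0$, which gives the second row of $W_1$ and shows the second row of the off-diagonal block is zero. For the first row I would use the structure equation for $\nabla_X\xi$ from Proposition \ref{h} to write $\frac{D}{dt}v_0=\nabla_{v_1}\xi$; pairing with $v_1$, $v_2$ and with $v_i$ ($i\geq 3$) and using $h\cJ=-\cJ h$, the symmetry of $h$ and $\cJ^2=-\mathrm{id}$ on $\ker\eta$ produces the entries $\tfrac12 H_{12}$, $-\tfrac12(1+H_{11})$ and $\tfrac12 H_{i2}$, matching the first row of $W_1$ and of $W_2U^T$. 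For the third row I would write $\frac{D}{dt}v_2=(\nabla_{v_1}\cJ)v_1+\cJ\frac{D}{dt}v_1=(\nabla_{v_1}\cJ)v_1-a\,v_1$, whose $v_0$-, $v_1$-, $v_2$- and $v_i$-components are $\tfrac12(1+H_{11})$, $-a$, $0$ and $N_i$ respectively; here the vanishing $v_1$- and $v_2$-components follow from the skew-symmetry of $\nabla_{v_1}\cJ$ together with the identity obtained by differentiating $\cJ^2=-\mathrm{id}+\eta\otimes\xi$. Finally, since the frame is orthonormal, $W(t)$ is antisymmetric, so the lower-left block is forced to be the negative transpose $-U(t)W_2(t)^T$ of the upper-right block, completing the matrix.

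The main obstacle I anticipate is twofold. The delicate bookkeeping lies in invoking Proposition \ref{h} correctly — in particular pinning down the $\xi$-components of $\nabla_X\xi$ and of $(\nabla_X\cJ)X$ and verifying the algebraic identities (for instance $\left<\nabla_{v_1}\xi,v_i\right>=\tfrac12 H_{i2}$ and the vanishing of $\left<(\nabla_{v_1}\cJ)v_1,v_2\right>$) needed to match the stated entries of $W_1$ and $W_2$. The more conceptual point is the construction of the adapted complementary frame: one must guarantee the existence of an orthonormal normal frame annihilating the lower-right block and then correctly interpret the orthogonal matrices $U(t)$ as recording the transition between this frame and the reference frame in which $h$ and $\nabla\cJ$ are evaluated.
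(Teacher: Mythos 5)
Your proposal is correct and follows essentially the same route as the paper: one computes $\tfrac{D}{dt}v_0$, $\tfrac{D}{dt}v_1$, $\tfrac{D}{dt}v_2$ from $\nabla\xi=-\tfrac12(\cJ+\cJ h)$, the geodesic equation (Lemma \ref{geo}) and $\nabla\cJ$, and completes the frame by a basis of $\{v_0,v_1,v_2\}^\perp$ that is parallel for the induced normal connection, which forces the lower-right block of $W(t)$ to vanish. The paper realizes your ``normally parallel frame'' explicitly by starting from an arbitrary complementary frame $\bar v_i$ and solving $\dot U(t)=-U(t)A(t)$ for the orthogonal transition matrices $U(t)$, which is precisely the family $U(t)$ appearing in the off-diagonal blocks of $W(t)$.
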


\begin{proof}
A computation shows that
\[
\begin{split}
&\dot v_0(t)=\nabla_{\dot\gamma(t)}\xi=-\frac{1}{2}(\cJ+\cJ h)\dot\gamma(t)=-\frac{1}{2}(v_2(t)-hv_2(t)),\\
&\dot v_1(t)=a(t)v_2(t) -\frac{H_{12}(t)}{2}v_0(t),\\
&\dot v_2(t)=(\nabla_{\dot\gamma(t)} \cJ)v_1(t)+\cJ\dot v_1(t)=(\nabla_{v_1(t)} \cJ)v_1(t) -a(t)v_1(t).
\end{split}
\]
Note that $N_0(t)=\frac{1}{2}(1+H_{11}(t))$ and $N_1=N_2=0$ by Proposition \ref{Rm}.

Let $\bar v_3(t),...,\bar v_{2n}(t)$ be a family of bases for the orthogonal complement of $\{v_0(t),v_1(t),v_2(t)\}^\perp$. Let $A(t)$ be the family of matrices defined by $A_{ij}(t)=\left<\dot{\bar v}_i(t), {\bar v}_j(t)\right>$, where $i, j=3,...,2n+1$. Let $U(t)$ be the family of orthogonal matrices defined by $U(0)=I$ and $\dot U(t)=-U(t)A(t)$. Finally, let $v_i(t)=\sum_{j=3}^{2n+1}U_{ij}(t)\bar v_j(t)$.
\[
\begin{split}
\left(\begin{array}{c}
\dot v_3(t)\\
\vdots\\
\dot v_{2n+1}(t)
\end{array}\right)&= (\dot U(t)+U(t)A(t))\left(\begin{array}{c}
\bar v_3(t)\\
\vdots\\
\bar v_{2n+1}(t)
\end{array}\right)-U(t)W_2(t)^T\left(\begin{array}{c}
v_0(t)\\
v_1(t)\\
v_{2}(t)
\end{array}\right)\\
&=-U(t)W_2(t)^T\left(\begin{array}{c}
v_0(t)\\
v_1(t)\\
v_{2}(t)
\end{array}\right)
\end{split}
\]
The assertion follows.
\end{proof}

Using the above frames, we can show that the Hessian of $g$ satisfies a matrix Riccati equation.

\begin{lem}\label{S1}
Let
\begin{enumerate}
\item $S_{ij}(t)=-\left<\nabla^2 g(v_i(t)), v_j(t)\right>$,
\item $H_{ij}(t)=\left<h(v_i(t)), v_j(t)\right>$,
\item $J_{ij}(t)=\left<\J v_i(t), v_j(t)\right>$,
\item $S_1(t)=S(t)+\frac{a(t)}{2}H(t)J(t)$,
\item $R_{ij}(t)=\left<\Rm(v_i(t), \dot\gamma(t))\dot\gamma(t), v_j(t)\right>$,
\item $K_{1, ij}(t)=\left<(\nabla_{v_j(t)}\J)v_i(t)+(\nabla_{v_i(t)}\J)v_j(t), v_1(t)\right>$,
\item $C_i=\left(\begin{array}{cc}
\bar C_i & 0\\
0 & 0_{2n-2}
\end{array}\right)$ with $i=1,2$,
\item $\bar C_1=\left(\begin{array}{ccc}
0 & 0 & 0\\
0 & 0 & 0\\
0 & 0 & 1
\end{array}\right)$,
\item $\bar C_2=\left(\begin{array}{ccc}
0 & 0 & 0\\
0 & 0 & 0\\
1 & 0 & 0
\end{array}\right)$,
\item $C_3=\left(\begin{array}{cc}
\bar C_3 & 0\\
0 & I_{2n-2}
\end{array}\right)$,
\item $\bar C_3=\left(\begin{array}{ccc}
0 & 0 & 0\\
0 & 1 & 0\\
0 & 0 & 1
\end{array}\right)$.
\end{enumerate}
Then, for all $t$ in the open interval $(0, 1)$,
\[
\begin{split}
&\dot S_1(t) -\left(W(t)-\frac{a(t)}{2}J(t) +\frac{1}{2}(I +H(t))C_2\right)S_1(t)\\
& -S_1(t)\left(W(t)-\frac{a(t)}{2}J(t)  +\frac{1}{2}(I +H(t))C_2\right)^T +S_1(t)C_3S_1(t)\\
&=-R(t) +\frac{H_{12}(t)}{4} H(t)J(t)+\frac{1}{4}(I+H(t))C_1(I+H(t)) -\frac{a(t)^2}{4}C_3 -\frac{a(t)}{2}K_1(t).
\end{split}
\]
\end{lem}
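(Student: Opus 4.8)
The plan is to differentiate the defining relation $S_{ij}(t) = -\langle\nabla^2 g(v_i(t)), v_j(t)\rangle$ directly along $\gamma$, to read off a Riccati structure from the third covariant derivative of $g$, and then to absorb the contact corrections by passing from $S$ to $S_1$. Differentiating $S_{ij}$ and inserting the frame evolution $\frac{D}{dt}v = W v$ from the preceding lemma, the product rule and the symmetry of $\nabla^2 g$ give
\[
\dot S_{ij} = -\langle(\nabla_{\dot\gamma}\nabla^2 g)(v_i), v_j\rangle + (WS)_{ij} + (SW^T)_{ij},
\]
so the conjugation terms appear automatically and the whole computation is reduced to evaluating the intrinsic third derivative $(\nabla_{\dot\gamma}\nabla^2 g)(v_i)$; the additional drift pieces $-\tfrac a2 J(t)+\tfrac12(I+H(t))C_2$ sitting next to $W(t)$ in the target will be produced later, when the $\cJ$- and $\xi$-dependent factors are differentiated.

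Next I would evaluate the third derivative by commuting covariant derivatives. The Ricci identity for the gradient field gives $(\nabla_{\dot\gamma}\nabla^2 g)(v_i) = (\nabla_{v_i}\nabla^2 g)(\dot\gamma) + \Rm(\dot\gamma, v_i)\nabla g$, and the two summands supply the two signature terms of a Riccati equation. Using (\ref{geograd}), i.e. $\dot\gamma = -\nabla_H g$, together with $\nabla g = -\dot\gamma - a\,\xi$ where $a=-\langle\nabla g,\xi\rangle$, the curvature piece splits as $\Rm(\dot\gamma, v_i)\nabla g = -\Rm(\dot\gamma, v_i)\dot\gamma - a\,\Rm(\dot\gamma, v_i)\xi$; the first term produces the curvature term $-R(t)$, while the second, $-a\,\Rm(\dot\gamma, v_i)\xi$, is rewritten through the appendix identity for $\Rm(\cdot,\cdot)\xi$ (Proposition \ref{Rm}) into curvature-free expressions in $H$, $J$ and $N$. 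The quadratic term $S_1 C_3 S_1$ comes from the transport piece $(\nabla_{v_i}\nabla^2 g)(\dot\gamma)$: because $\dot\gamma = -\nabla_H g$ rather than $-\nabla g$, one has $\nabla_{v_i}\dot\gamma = -\nabla^2 g(v_i) + (\text{terms along }\xi)$, so a second factor of the Hessian enters, but only through its horizontal component --- which is exactly the content of the projection $C_3$ onto $\ker\eta$.

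I would then translate everything into the matrices $H$, $J$ and $N$ using the appendix formula $\nabla_X\xi = -\tfrac12(\cJ + \cJ h)X$ (Proposition \ref{h}), the identity $\nabla^2 g(\nabla_H g) = -\tfrac a2(\cJ + h\cJ)\nabla g$ derived in the proof of Lemma \ref{geo}, and the relations $\cJ\dot\gamma = v_2$, $\cJ\xi = 0$. Differentiating the $\xi$- and $\cJ$-dependent factors in these expressions is what generates the drift corrections $-\tfrac a2 J(t)$ and $\tfrac12(I+H(t))C_2$ inside the effective connection matrix. Substituting the explicit first row and column of $S$, which the geodesic equation pins down to $S_{1j} = \tfrac a2(\delta_{2j} + H_{2j})$, into the quadratic and $\xi$-curvature contributions should then produce the remaining inhomogeneous matrices $\tfrac14(I+H)C_1(I+H)$, $-\tfrac{a^2}{4}C_3$, $\tfrac{H_{12}}{4}HJ$ and $-\tfrac a2 K_1(t)$.

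Finally, the equation for $S$ itself is neither symmetric nor in clean Riccati form, precisely because of the pinned first row and column above. Setting $S_1 = S + \tfrac a2 H J$ and computing $\dot S_1 = \dot S + \tfrac{\dot a}{2}H J + \tfrac a2 \dot H J + \tfrac a2 H \dot J$, then substituting $\dot a = \tfrac12 H_{12}$ from Lemma \ref{geo}, should absorb exactly these asymmetric pieces and deliver the stated matrix identity. I expect the main obstacle to be not any single conceptual step but the bookkeeping of the third-derivative computation: one must check that the $\Rm(\cdot,\cdot)\xi$ term reduces through Proposition \ref{Rm} to precisely the combination encoded by $C_1$, $C_2$ and $K_1$, and that $\tfrac a2 HJ$ is the unique correction symmetrizing the equation. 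Matching this large collection of contact-structure terms, rather than establishing the Riccati skeleton, is where the real work lies.
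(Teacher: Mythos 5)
Your plan is correct and follows essentially the same route as the paper's proof: the paper likewise derives the Riccati equation by differentiating the eikonal identity $|\nabla_H g|=1$ (producing the curvature term via the same commutation of third derivatives, the quadratic term $\sum_{k\neq 0}S_{ik}S_{kj}=SC_3S$ from the horizontal gradient, and the drift terms from $\nabla_{v_i}\xi$), then invokes Propositions \ref{J} and \ref{Rm} to handle the $\Rm(\cdot,\cdot)\xi$ and $\nabla\cJ$ contributions, and finally symmetrizes via $S_1=S+\tfrac{a}{2}HJ$ with $\dot a=\tfrac12 H_{12}$. The identities you cite ($\nabla^2g(\nabla_Hg)=-\tfrac a2(\cJ+h\cJ)\nabla g$, $S_{1j}=\tfrac a2(\delta_{2j}+H_{2j})$) are exactly the ones the paper uses, so only the advertised bookkeeping remains.
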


\begin{proof}
By differentiating (\ref{eikonal}) twice,
\[
\begin{split}
&0=\nabla^3g(\nabla_H g,Y,X)+\left<\Rm(Y,\nabla_H g)\nabla g,X\right>+\nabla^2g(\nabla^2 g(Y),X)\\
&-\nabla^2 g(\xi,Y)\nabla^2g(\xi,X) -\left<\nabla g,\nabla_Y\xi\right>\nabla^2g(\xi,X)\\
&-\left<\nabla g,\xi\right>\nabla^2g(\nabla_Y\xi,X)-\frac{\left<\nabla^2 g(Y),\xi\right>}{2}\left<\cJ\nabla g+h\cJ\nabla g,X\right>\\
&-\frac{\left<\nabla g,\nabla_Y\xi\right>}{2}\left<\cJ\nabla g+h\cJ\nabla g,X\right>\\
&-\frac{\left<\nabla g,\xi\right>}{2}\left<\cJ\nabla^2 g(Y)+h\cJ\nabla^2 g(Y),X\right>\\
&-\frac{\left<\nabla g,\xi\right>}{2}\left<\nabla_Y(\cJ+h\cJ)\nabla g,X\right>
\end{split}
\]
By setting $Y=v_i(t)$, $X=v_j(t)$, and $S_{ij}(t)=-\nabla^2g(v_i(t),v_j(t))$,
\[
\begin{split}
&0=-\nabla^3g(v_1(t),v_i(t),v_j(t))+R_{ij}(t)\\
&+a(t)\left<\Rm(v_i(t),v_1(t))v_0(t),v_j(t)\right>+\sum_{k\neq 0}S_{ik}(t)S_{kj}(t)\\
&-\frac{1}{2}(\delta_{i2}+H_{i2}(t))S_{0j}(t) +\frac{a(t)}{2}\sum_k\left(J_{ik}(t)+\sum_lH_{il}(t)J_{lk}(t)\right)S_{kj}(t) \\
&-\frac{S_{0i}(t)}{2}(\delta_{2j}+H_{2j}(t))-\frac{(\delta_{i2}+H_{i2}(t))(\delta_{j2}+H_{j2}(t))}{4}\\
&-\frac{a(t)}{2}\sum_kS_{ik}(t)\left(J_{kj}(t)+\sum_lJ_{kl}(t)H_{lj}(t)\right)\\
&+\frac{a(t)^2}{2}\left<(\cJ+h\cJ)\nabla_{v_i(t)}\xi,v_j(t)\right>\\ &-\frac{a(t)}{2}\left<\nabla_{v_i(t)}(\cJ+h\cJ)v_1(t),v_j(t)\right>
\end{split}
\]
On the other hand,
\[
\begin{split}
&\frac{d}{dt}\nabla^2g_{\gamma(t)}(v_i(t),v_j(t)) =\nabla^3g_{\gamma(t)}(\dot\gamma(t),v_i(t),v_j(t))\\ &+\sum_kW_{ik}(t)\nabla^2g_{\gamma(t)}(v_k(t),v_j(t)) +\sum_kW_{jk}(t)\nabla^2g_{\gamma(t)}(v_i(t),v_k(t)).
\end{split}
\]
Therefore,
\[
\begin{split}
&-\dot S_{ij}(t)+\sum_kW_{ik}(t)S_{kj}(t)+\sum_kW_{jk}(t)S_{ki}(t)\\
&=R_{ij}(t)+a(t)\left<\Rm(v_i(t),v_1(t))v_0(t),v_j(t)\right>+\sum_{k\neq 0}S_{ik}(t)S_{kj}(t)\\
&-\frac{1}{2}(\delta_{i2}+H_{i2}(t))S_{0j}(t) +\frac{a(t)}{2}\sum_k\left(J_{ik}(t)+\sum_lH_{il}(t)J_{lk}(t)\right)S_{kj}(t) \\
&-\frac{S_{0i}(t)}{2}(\delta_{2j}+H_{2j}(t))-\frac{(\delta_{i2}+H_{i2}(t))(\delta_{j2}+H_{j2}(t))}{4}\\
&-\frac{a(t)}{2}\sum_kS_{ik}(t)\left(J_{kj}(t)+\sum_lJ_{kl}(t)H_{lj}(t)\right)\\
&+\frac{a(t)^2}{2}\left<(\cJ+h\cJ)\nabla_{v_i(t)}\xi,v_j(t)\right>\\ &-\frac{a(t)}{2}\left<\nabla_{v_i(t)}(\cJ+h\cJ)v_1(t),v_j(t)\right>
\end{split}
\]

By Proposition \ref{J} and \ref{Rm},
\[
\begin{split}
&-\frac{1}{2}\left<v_1(t), \nabla_{v_i(t)}(\cJ+\cJ h)v_j(t)\right> -\left<\Rm(v_i(t), v_1(t))v_0(t), v_j(t)\right>\\
&=-\frac{1}{2}\left<(\nabla_{v_j(t)}\cJ)(v_i(t))+(\nabla_{v_i(t)}\cJ)v_j(t), v_1(t)\right>-\frac{1}{2}\left<\nabla_{v_1(t)}(\cJ h)v_i(t), v_j(t)\right>.
\end{split}
\]
It follows that
\[
\begin{split}
&-\dot S_{ij}(t)+\sum_kW_{ik}(t)S_{kj}(t)+\sum_kW_{jk}(t)S_{ki}(t)\\
&=R_{ij}(t) +\frac{a(t)}{2}\left<\nabla_{v_i(t)}\cJ(v_j(t)),v_1(t)\right> +\frac{a(t)}{2}\left<\nabla_{v_j(t)}\cJ(v_i(t)),v_1(t)\right>\\ &+\frac{a(t)}{2}\left<\nabla_{v_1(t)}(\cJ h)(v_i(t)),v_j(t)\right> +\sum_{k\neq 0}S_{ik}(t)S_{kj}(t)\\
&-\frac{1}{2}(\delta_{i2}+H_{i2}(t))S_{0j}(t) +\frac{a(t)}{2}\sum_k\left(J_{ik}(t)+\sum_lH_{il}(t)J_{lk}(t)\right)S_{kj}(t) \\
&-\frac{S_{0i}(t)}{2}(\delta_{2j}+H_{2j}(t)) -\frac{(\delta_{i2}+H_{i2}(t))(\delta_{j2}+H_{j2}(t))}{4}\\
&-\frac{a(t)}{2}\sum_kS_{ik}(t)\left(J_{kj}(t)+\sum_lJ_{kl}(t)H_{lj}(t)\right)\\
&-\frac{a(t)^2}{4}\left<(\cJ+h\cJ)(\cJ+\cJ h)v_i(t),v_j(t)\right>.
\end{split}
\]

In other words,
\[
\begin{split}
&-\dot S(t)+W(t)S(t)+S(t)W(t)^T\\
&=R(t) +\frac{a(t)}{2}K_1(t)\\
&+\frac{a(t)}{2}\left(\frac{d}{dt}(H(t)J(t))-W(t)H(t)J(t)-H(t)J(t)W(t)^T\right)\\
& +S(t)C_3S(t)-\frac{1}{2}(I+H(t))C_2S(t) +\frac{a(t)}{2}(J(t)+H(t)J(t))S(t) \\
&-\frac{1}{2}S(t)C_2^T(I+H(t))-\frac{1}{4}(I+H(t))C_1(I+H(t))\\
&-\frac{a(t)}{2}S(t)(J(t)+J(t)H(t))+\frac{a(t)^2}{4}(C_3+2H(t)+H(t)^2).
\end{split}
\]

By rewriting this in terms of $S_1(t)$ and using $C_2H(t)=0$,
\[
\begin{split}
&0=\dot S_1(t)+R(t) +\frac{a(t)}{2}K_1(t)-\frac{H_{12}(t)}{4}H(t)J(t) +S_1(t)C_3S_1(t)\\
&-S_1(t)\left(W(t)-\frac{a(t)}{2}J(t)+\frac{1}{2}(I+H(t))C_2\right)^T\\
&-\left(W(t)-\frac{a(t)}{2}J(t)+\frac{1}{2}(I+H(t))C_2\right)S_1(t)\\ &-\frac{1}{4}(I+H(t))C_1(I+H(t)) +\frac{a(t)^2}{4}C_3
\end{split}
\]
as claimed.
\end{proof}

\smallskip

\section{On Principal Circle Bundles Over $\mathbb{CP}^n$}\label{Hopf}

In this section, we will give a brief discussion on the model space, circle bundles over the complex projective space (see \cite{Bl,Ka,Mo} for further details).

The Hopf fibration is a principal circle bundle $S^1\to S^{2n+1}\to\mathbb{CP}^n$. We consider the total space $S^{2n+1}$ as the subset of the complex vector space $\mathbb{C}^{n+1}$
\[
S^{2n+1}=\left\{z\in\mathbb{C}^{n+1}||z|^2=4\right\}.
\]
The circle action is defined by $\theta\mapsto e^{-i\theta/2}z$. Its infinitesimal generator $-iz/2$ defines the Reeb vector field of the contact structure and its orbits have length $4\pi$ (assuming that the Reeb field has length 1).

The Riemannian metric $\left<\cdot,\cdot\right>$, the contact form $\eta$, and the tensor $\cJ$ are defined by $\left<v,w\right>=\text{Re}\left(\sum_{i=1}^{n+1}\bar v_iw_i\right)$, $\eta(w)=\left<-\frac{iz}{2},w\right>$, and $\cJ v=iv$, respectively, where $v=(v_1,...,v_{n+1})$.

Points in $S^{2n+1}$ are of the form
\[
(2\cos\theta \,z_0,2\sin\theta \,z')
\]
where $|z_0|=|z'|=1$. The points $(2\cos\theta z_0,0,...,0)$ and $(0,2z')$ are joined by the unit speed (Riemannian or sub-Riemannian) geodesic $t\mapsto (2\cos(t/2)z_0,2\sin(t/2)z')$ of length $\pi$.

The standard Euclidean structure on $\mathbb{C}^{n+1}$ induces a Riemannian metric on $S^{2n+1}$ with constant sectional curvature $\frac{1}{4}$. It induces a metric on the quotient $\mathbb{CP}^n$ such that the projection map is a Riemannian submersion. By the formula in \cite{On}, the curvature on $\mathbb{CP}^n$ is given by $\left<\Rm(Y,X)X,Y\right>=\frac{1}{4}+\frac{3}{4}\left<X,JY\right>^2$ for all unit tangent vectors $X$ and $Y$ such that $\left<X,Y\right>=0$. So this Riemannian structure on $\mathbb{CP}^n$ is a constant multiple of the Fubini-Study metric.

If we multiply this Riemannian metric on $\mathbb{CP}^n$ and the contact form $\eta$ on $S^{2n+1}$ by a constant $1/k^2$, then the curvature of the new Riemannian metric will satisfy $\left<\Rm(Y,X)X,Y\right>=\frac{k^2}{4}+\frac{3k^2}{4}\left<X,JY\right>^2$ for all unit tangent vectors $X$ and $Y$ which satisfies $\left<X,Y\right>=0$ with respect to the new Riemannian metric. In order to make everything compatible, one also need to multiply the old Reeb field by $k^2$ to get the new field. The length of an orbit of the new Reeb field becomes $\frac{4\pi}{k^2}$. The complex projective space equipped with this K\"ahler structure is denoted by $\mathcal B(k,1)$. The total space of the Hopf fibration together with contact metric structure induced from $\mathcal B(k,1)$ is denoted by $\mathcal P(k,1)$. Finally, the quotient of $\mathcal P(k,1)$ by the discrete subgroup of $S^1$ of order $m$ is denoted by $\mathcal P(k,m)$.

\smallskip

\section{Myers' Type Maximal Diameter Theorems for Symplectic and K-contact Manifolds}\label{SMyers}

A contact metric manifold is $K$-contact if the Reeb field $\xi$ is Killing. In this section, we assume that the contact manifold is $K$-contact and give the proof of the following Myers' type maximal diameter theorem. This guarantees that all manifolds which satisfy the conditions in Theorem \ref{main} are compact.

\begin{thm}\label{Myer}
Let $\mathcal N:\ker\eta\to\ker\eta$ be the bilinear form defined by $\mathcal N(X)=P(\nabla_X\cJ)X$, where $P:TM\to\ker\eta$ is the orthogonal projection.
\begin{enumerate}
\item Assume that
\begin{equation}\label{bound1}
\left<\overline\Rm(\cJ X,X)X,\cJ X\right>-|\mathcal N(X)|^2\geq k_1^2
\end{equation}
for all unit tangent vectors $X$. Then the diameter of $M$ with respect to the metric $d_{CC}$ is bounded above by $\frac{2\pi}{k_1}$.
\item Assume that
\begin{equation}\label{bound2}
\tr_{\{X,\cJ X\}^\perp}(Y\mapsto\left<\overline\Rm(Y,X)X,Y\right>)-|\mathcal N(X)|^2\geq k_2^2
\end{equation}
for all unit tangent vector $X$, where $\tr_{\{X,\cJ X\}^\perp}$ denotes the trace of the bilinear form defined on $\{X,\cJ X\}^\perp$. Then the diameter of $M$ with respect to the metric $d_{CC}$ is bounded above by $\frac{\sqrt{2n-2}\pi}{k_2}$.
\end{enumerate}
\end{thm}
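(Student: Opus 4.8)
The plan is to carry out a sub-Riemannian Myers argument in which the matrix Riccati equation of Lemma~\ref{S1} plays the role of the Jacobi equation. Fix $x_0\in M$ and set $g=d_{CC}(x_0,\cdot)$; by the regularity recalled in Section~\ref{subintro}, $g$ is $C^\infty$ along the interior of any minimizing sub-Riemannian geodesic and singular only at its endpoints. Let $\gamma:[0,L]\to M$ be a unit-speed minimizer from an arbitrary point to $x_0$, so $L=d_{CC}(\gamma(0),x_0)$ and $\dot\gamma=-\nabla_Hg$ by \eqref{geograd}. Along $\gamma$ I use the orthonormal frame $v(t)$ and the symmetric matrix $S(t)$ of Lemma~\ref{S1}; the $K$-contact hypothesis $h=0$ gives the decisive simplifications $H(t)\equiv 0$ (hence $S_1=S$) and, via Lemma~\ref{geo}, $\dot a=\tfrac12\langle h\dot\gamma,\cJ\dot\gamma\rangle=0$, so $a$ is constant. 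Because $g$ is smooth on the interior, $S(t)$ is finite on $(0,L)$; the whole argument consists in showing that a suitable scalar quantity built from $S$ is nevertheless forced to become singular at an interior time once $L$ exceeds the claimed bound, which is the contradiction.

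For part~(1) I would track the behaviour of $S$ in the single direction $v_2=\cJ\dot\gamma$. The frame evolution $W$ together with the correction $-\tfrac{a}{2}J+\tfrac12 C_2$ couples $v_2$ to the Reeb direction $v_0=\xi$, so this is really a two-dimensional $(\xi,\cJ\dot\gamma)$ subsystem of the Riccati identity. Restricting to it and using the appendix formulas relating the Levi-Civita curvature $\Rm$ and $\nabla\cJ$ to the Tanaka-Webster curvature $\overline{\Rm}$ and the tensor $\mathcal N$, the curvature data on the right-hand side (the terms $-R$, $-\tfrac{a}{2}K_1$, and $\tfrac14 C_1$) should assemble precisely into $\langle\overline{\Rm}(\cJ\dot\gamma,\dot\gamma)\dot\gamma,\cJ\dot\gamma\rangle-|\mathcal N(\dot\gamma)|^2\ge k_1^2$ from \eqref{bound1}. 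The resulting scalar Riccati inequality has effective oscillation frequency $k_1/2$---the extra factor $\tfrac12$ being exactly the contribution of the coupling to $\xi$---so a Sturm comparison shows that, running backward from the endpoint singularity at $t=L$, the linearizing solution must acquire an additional zero within the half-period $2\pi/k_1$. If $L>2\pi/k_1$ this zero lies in $(0,L)$ and yields a singularity of $S$ there, contradicting its finiteness on the interior.

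For part~(2) I would instead trace the Riccati identity over the $(2n-2)$-dimensional subspace $\{v_3,\dots,v_{2n}\}=\{\dot\gamma,\cJ\dot\gamma\}^\perp\cap\ker\eta$, setting $u(t)=\sum_{i=3}^{2n}S_{ii}(t)$. On this block the quadratic term $\tr(SC_3S)$ dominates $u^2/(2n-2)$ by Cauchy--Schwarz, while the trace of the curvature terms becomes, again through the appendix identities, $\tr_{\{X,\cJ X\}^\perp}\langle\overline{\Rm}(\cdot,X)X,\cdot\rangle-|\mathcal N(X)|^2\ge k_2^2$ from \eqref{bound2}. This yields a scalar inequality of the form $\dot u+\tfrac{u^2}{2n-2}+k_2^2\le 0$ (up to the sign convention in the $t$-variable); the substitution $u=(2n-2)\dot w/w$ linearizes it to $\ddot w+\tfrac{k_2^2}{2n-2}\,w\le 0$, whose positive solutions must vanish within the half-period $\sqrt{2n-2}\,\pi/k_2$, again forcing an interior singularity of $S$ when $L$ exceeds this value. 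Since $x_0$ and $\gamma(0)$ were arbitrary, the two cases give the stated diameter bounds $\tfrac{2\pi}{k_1}$ and $\tfrac{\sqrt{2n-2}\,\pi}{k_2}$.

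The step I expect to be the main obstacle is precisely the algebraic reduction of the full matrix Riccati equation of Lemma~\ref{S1} to these scalar comparisons: one must check that the numerous coupling terms ($W$, $J$, the $C_i$, $K_1$, and the constant $a$) combine so that (i) the Levi-Civita curvature collapses exactly onto the Tanaka-Webster expressions in \eqref{bound1}--\eqref{bound2}, and (ii) the effective frequencies come out to be $k_1/2$ and $k_2/\sqrt{2n-2}$. Getting the factor of $2$ in the first bound correct---that is, genuinely accounting for the coupling of $\cJ\dot\gamma$ to the Reeb field rather than treating $v_2$ as an isolated direction, which would give the wrong constant $\pi/k_1$---is the delicate point of the whole proof.
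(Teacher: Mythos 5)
Your treatment of part (2) matches the paper: restrict the Riccati identity of Lemma \ref{S1} (with $h=0$, $a$ constant) to the $(2n-2)$-block spanned by $v_3,\dots,v_{2n}$, absorb the off-diagonal coupling by completing a square so that only $-W_1'(t)^TW_1'(t)$ (whose trace is $-|\mathcal N(\dot\gamma)|^2$) survives, apply Cauchy--Schwarz to get $\dot s+\tfrac{1}{2n-2}s^2+k_2^2\le 0$, and compare with $\sqrt{2n-2}\,k_2\cot(k_2t/\sqrt{2n-2})$. That part is fine.

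Part (1), however, contains a genuine gap, and it is exactly the step you flag as ``the main obstacle.'' You propose to reduce the problem to a scalar Sturm comparison on a two-dimensional $(\xi,\cJ\dot\gamma)$ subsystem with ``effective frequency $k_1/2$,'' so that a zero appears within the half-period $2\pi/k_1$. This is not how the mechanism works, and the reduction is asserted rather than carried out. The paper keeps the full $3\times3$ block $S_{1,0}$ in the directions $\xi,\dot\gamma,\cJ\dot\gamma$ (the $\dot\gamma$-row does not decouple when $a\neq 0$, and the $\xi$-direction carries the characteristic sub-Riemannian $t^{-3}$ singularity at $t=0$), and compares it with an \emph{explicit} matrix solution $\bar S_0(t)$ of the model Riccati equation, computed by Levin's method. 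In that solution the oscillation frequency is $c_1=\sqrt{k_1^2+a^2}\ge k_1$, not $k_1/2$, and the singularity occurs at the first zero of $s(t)=2-2\cos(c_1t)-c_1t\sin(c_1t)$, which is $t=2\pi/c_1$ --- a \emph{full} period, exactly as for conjugate points along Heisenberg-type sub-Riemannian geodesics. The bound $2\pi/k_1$ is then the worst case $a=0$. Your ``frequency $k_1/2$, half-period'' heuristic happens to produce the same number when $a=0$, but it does not account for the secular term $c_1t\sin(c_1t)$ in the denominator nor for the dependence on the constant $a$, and no scalar Riccati inequality with that frequency is actually derived from Lemma \ref{S1}. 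To close the gap you would need either to reproduce the explicit $3\times3$ comparison solution or to otherwise prove that the coupled system develops an interior singularity by time $2\pi/\sqrt{k_1^2+a^2}$; the Sturm argument as stated does not do this.
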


Note that (\ref{bound1}) becomes a lower bound on a CR analogue of holomorphic sectional curvature $\left<\overline\Rm(\cJ X,X)X,\cJ X\right>$ if the manifold is Sasakian.

Next, we state a result which is a symplectic analogue of Theorem \ref{Myer}. In the following theorem, $M$ is a manifold of dimension $2n$ equipped with a symplectic structure $\omega$, an almost complex structure $\cJ$, and a compatible Riemannian metric $\left<\cdot,\cdot\right>$. In particular, $\left<X,\cJ Y\right>=\omega(X,Y)$.

\begin{thm}\label{symplMyer}
Let $\mathcal N:TM\to TM$ be the bilinear form defined by $\mathcal N(X)=(\nabla_X\cJ)X$.
\begin{enumerate}
\item Assume that
\begin{equation}\label{bound1s}
\left<\Rm(\cJ X,X)X,\cJ X\right>-|\mathcal N(X)|^2\geq k_1^2
\end{equation}
for all unit tangent vectors $X$. Then the diameter of $M$ with respect to the Riemannian metric $d$ is bounded above by $\frac{\pi}{k_1}$.
\item Assume that
\begin{equation}\label{bound2s}
\tr_{\{X,\cJ X\}^\perp}(Y\mapsto\left<\Rm(Y,X)X,Y\right>)-|\mathcal N(X)|^2\geq k_2^2
\end{equation}
for all unit tangent vector $X$, where $\tr_{\{X,\cJ X\}^\perp}$ denotes the trace of the bilinear form defined on $\{X,\cJ X\}^\perp$. Then the diameter of $M$ with respect to the Riemannian metric $d$ is bounded above by $\frac{\sqrt{2n-2}\pi}{k_2}$.
\end{enumerate}
\end{thm}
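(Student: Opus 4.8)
The plan is to run the classical second-variation (Myers) argument, but with variation fields adapted to $\cJ$ so that the correction term $-|\mathcal N(X)|^2$ is produced automatically. Fix points $p,q\in M$ and a minimizing unit-speed geodesic $\gamma:[0,L]\to M$ with $L=d(p,q)$ and $X:=\dot\gamma$. Since $\gamma$ is minimizing, the index form $I(V,V)=\int_0^L\bigl(|\nabla_{\dot\gamma}V|^2-\left<\Rm(V,X)X,V\right>\bigr)\,dt$ is nonnegative for every piecewise-smooth $V$ vanishing at the endpoints, so it suffices to produce such $V$ with $I(V,V)<0$ once $L$ exceeds the stated bound. First I would record a short algebraic lemma on $\nabla\cJ$: differentiating $\cJ^2=-\mathrm{id}$ gives $(\nabla_X\cJ)\cJ=-\cJ(\nabla_X\cJ)$, and differentiating $\left<\cJ A,B\right>=-\left<A,\cJ B\right>$ shows each $\nabla_X\cJ$ is skew-symmetric for $\left<\cdot,\cdot\right>$. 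Skew-symmetry gives $\left<\mathcal N(X),X\right>=0$, and combining the two identities gives $\left<\mathcal N(X),\cJ X\right>=0$; hence $\mathcal N(X)\in\{X,\cJ X\}^\perp$. Since $X$ is parallel, $\nabla_{\dot\gamma}(\cJ X)=(\nabla_X\cJ)X=\mathcal N(X)$, and $|\cJ X|=1$, $\left<X,\cJ X\right>=0$.

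For part (1) I take $V(t)=f(t)\,\cJ X$ with $f=\sin(\pi t/L)$. Because $\nabla_{\dot\gamma}(\cJ X)=\mathcal N(X)\perp\cJ X$, the cross term vanishes and $|\nabla_{\dot\gamma}V|^2=(f')^2+f^2|\mathcal N(X)|^2$, while the curvature term is $f^2\left<\Rm(\cJ X,X)X,\cJ X\right>$. Thus $I(V,V)=\int_0^L\bigl[(f')^2-f^2(\left<\Rm(\cJ X,X)X,\cJ X\right>-|\mathcal N(X)|^2)\bigr]\,dt\le\int_0^L\bigl[(f')^2-k_1^2f^2\bigr]\,dt=\tfrac{1}{2L}(\pi^2-k_1^2L^2)$, using (\ref{bound1s}) together with $\int_0^L(f')^2\,dt=\pi^2/(2L)$ and $\int_0^L f^2\,dt=L/2$. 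If $L>\pi/k_1$ this is negative, a contradiction, yielding the diameter bound.

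For part (2) I construct an orthonormal frame $v_3(t),\dots,v_{2n}(t)$ of $\{X,\cJ X\}^\perp$ along $\gamma$ whose covariant derivatives have no component inside the complement, exactly as in the orthonormal-frame construction of Section \ref{subintro}: start from any orthonormal frame $\bar v_i$ of the smooth rank-$(2n-2)$ bundle $\{X,\cJ X\}^\perp$ and rotate by the solution $U$ of $\dot U=-UA$, $A_{ij}=\left<\nabla_{\dot\gamma}\bar v_i,\bar v_j\right>$. Then $\nabla_{\dot\gamma}v_i\in\spn\{X,\cJ X\}$; its $X$-component vanishes (as $X$ is parallel) and its $\cJ X$-component equals $-\left<v_i,\mathcal N(X)\right>$, so $|\nabla_{\dot\gamma}v_i|^2=\left<v_i,\mathcal N(X)\right>^2$. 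With $V_i=f v_i$ and $f=\sin(\pi t/L)$, summation gives $\sum_i|\nabla_{\dot\gamma}V_i|^2=(2n-2)(f')^2+f^2\sum_i\left<v_i,\mathcal N(X)\right>^2$, and since $\mathcal N(X)\in\{X,\cJ X\}^\perp$ the last sum is exactly $|\mathcal N(X)|^2$. The curvature sum is $f^2\,\tr_{\{X,\cJ X\}^\perp}(Y\mapsto\left<\Rm(Y,X)X,Y\right>)$, so by (\ref{bound2s}), $\sum_i I(V_i,V_i)\le\int_0^L\bigl[(2n-2)(f')^2-k_2^2f^2\bigr]\,dt=\tfrac{1}{2L}\bigl((2n-2)\pi^2-k_2^2L^2\bigr)$, which is negative once $L>\sqrt{2n-2}\,\pi/k_2$, again contradicting minimality.

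I expect the only delicate point to be part (2): one must verify that $\{X,\cJ X\}^\perp$ is a smooth subbundle along $\gamma$ (clear, since $X,\cJ X$ are smooth, orthonormal, and independent), that the rotated frame stays inside it, and — the real crux — that $\mathcal N(X)$ lands in this complement so that $\sum_i\left<v_i,\mathcal N(X)\right>^2$ recovers the full norm $|\mathcal N(X)|^2$ rather than a partial sum; this is precisely what the algebraic identities $\left<\mathcal N(X),X\right>=\left<\mathcal N(X),\cJ X\right>=0$ guarantee. Everything else is the standard $\sin(\pi t/L)$ computation.
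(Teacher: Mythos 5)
Your proof is correct, but it takes a genuinely different route from the paper. The paper derives Theorem \ref{symplMyer} by the same machinery as the contact case: it fixes the adapted frame $v_1=\dot\gamma$, $v_2=\cJ\dot\gamma$, $\dot v_i\in\spn\{v_1,v_2\}$ along a minimizing geodesic, writes the matrix Riccati equation satisfied by the Hessian $S(t)=-\nabla^2 g(v_i,v_j)$ of the distance function, splits it into the $2\times 2$ block containing $\cJ\dot\gamma$ and the $(2n-2)\times(2n-2)$ block, and obtains the term $-|\mathcal N(X)|^2$ by completing the square against the off-diagonal connection matrix $A_1$ (whose entries are the $N_i=\langle(\nabla_{\dot\gamma}\cJ)\dot\gamma,v_i\rangle$); the diameter bound then follows from Royden's comparison theorem for Riccati equations against explicit cotangent-type model solutions. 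You instead run the classical Myers second-variation argument with variation fields $f\cJ\dot\gamma$ and $fv_i$, and the same correction $|\mathcal N(X)|^2$ appears as $|\nabla_{\dot\gamma}(\cJ\dot\gamma)|^2$ in the index form; your algebraic lemma that $\nabla_X\cJ$ is skew and anticommutes with $\cJ$, hence $\mathcal N(X)\perp X,\cJ X$, is exactly what is needed to make the cross terms vanish in part (1) and to recover the full norm $|\mathcal N(X)|^2$ from $\sum_i\langle v_i,\mathcal N(X)\rangle^2$ in part (2), and your frame rotation $\dot U=-UA$ is the same device the paper uses in Section \ref{subintro}. Your approach is more elementary and self-contained: it avoids the matrix Riccati comparison and the regularity theory for the distance function, and it gives the stated constants $\pi/k_1$ and $\sqrt{2n-2}\,\pi/k_2$ directly from $\int_0^L (f')^2\,dt=\pi^2/(2L)$. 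What the paper's Riccati formulation buys is uniformity with the sub-Riemannian Theorem \ref{Myer} (where an index-form argument is not available in the same form) and, more importantly, the explicit model solutions $\bar S_0(t)$, $\bar s(t)$ whose equality cases drive the rigidity analysis of Sections \ref{SClosedOrbit}--\ref{SEqual}; your argument proves the diameter bound itself cleanly but would not by itself feed into that later rigidity machinery.
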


Note that when the manifold $M$ is K\"ahler, both conditions (\ref{bound1s}) and (\ref{bound2s}) are satisfied if the bisectional curvature is bounded below by a positive constant (see \cite{TaYu} for a closely related result).

\begin{proof}[Proof of Theorem \ref{Myer}]
Since the manifold is $K$-contact, the tensor $h$ vanishes. It also follows that $a(t)$ is independent of time. Using the same notations as in the previous section, we have
\[
\begin{split}
&\dot S_1(t)+L'(t) +S_1(t)C_3S_1(t)-S_1(t)W'(t)^T-W'(t)S_1(t)=0,
\end{split}
\]
where
\[
W'(t)=W(t)-\frac{a}{2}J(t)+\frac{1}{2}C_2
\]
and
\[
L'(t)=R(t)+\frac{a}{2}K_1(t)-\frac{1}{4}C_1 +\frac{a^2}{4}C_3.
\]

Let $S_1(t)=\left(\begin{array}{cc}
S_{1,0}(t) & S_{1,1}(t)\\
S_{1,1}(t)^T & S_{1,2}(t)
\end{array}\right)$, $W'(t)=\left(\begin{array}{cc}
W'_{0} & W'_{1}(t)\\
-W'_{1}(t)^T & W_{2}'(t)
\end{array}\right)$, $L'(t)=\left(\begin{array}{cc}
L_{0}'(t) & L_{1}'(t)\\
L_{1}'(t)^T & L_{2}'(t)
\end{array}\right)$, $K_1(t)=\left(
\begin{array}{cc}
K_{1,0}(t) & K_{1,1}(t)\\
K_{1,1}(t)^T & K_{1,2}(t)
\end{array}\right)$, and $J(t)=\left(\begin{array}{cc}
J_{0} & 0\\
0 & J_{2}(t)
\end{array}\right)$, where $S_{1,0}(t)$, $W_0'(t)$, $L_0'(t)$, $K_{1,0}(t)$, and $J_0$ are $3\times 3$ blocks.

A computation using Theorem \ref{S1}, Proposition \ref{J}, and Proposition \ref{Rm} shows that
\[
\begin{split}
&J_0=\left(\begin{array}{ccc}
0 & 0 & 0\\
0 & 0 & 1\\
0 & -1 & 0
\end{array}\right),\\
&K_{1,0}=\left(\begin{array}{ccc}
0 & -\frac{1}{2} & 0\\
-\frac{1}{2} & 0 & 0\\
0 & 0 & 0
\end{array}\right),\\
&W_0'=\left(
\begin{array}{ccc}
0 & 0 & -\frac{1}{2}\\
0 & 0 & \frac{a}{2}\\
1 & -\frac{a}{2} & 0
\end{array}\right),\\
&W_1'(t)=\left(\begin{array}{ccc}
0 & ... & 0\\
0 & ... & 0\\
N_3(t) & ... & N_{2n}(t)
\end{array}\right)U(t)^T,\\
&W_2'(t)=-\frac{a}{2}J_2(t).
\end{split}
\]

The block $S_{1,0}(t)$ satisfies
\[
\begin{split}
&0=\dot S_{1,0}(t)-W_0'S_{1,0}(t)-W_1'(t)S_{1,1}(t)^T-S_{1,0}(t)^TW_0'^T\\
& -S_{1,1}(t)W_1'(t)^T+S_{1,0}(t)\bar C_3S_{1,0}(t)+S_{1,1}(t)S_{1,1}(t)^T+L_0'(t)\\
&\geq \dot S_{1,0}(t)-W_0'S_{1,0}(t)-S_{1,0}(t)^TW_0'^T\\
& +S_{1,0}(t)\bar C_3S_{1,0}(t)+L_0'(t)-W_1'(t)W_1'(t)^T,
\end{split}
\]
where $L_0'(t)=\left(\begin{array}{ccc}
\frac{1}{4} & -\frac{a}{4} & 0\\
-\frac{a}{4} & \frac{a^2}{4} & 0\\
0 & 0 & \bar R_{22}(t)-1+\frac{a^2}{4}
\end{array}\right)$, \\
$W_1(t)'W_1(t)'^T=\left(\begin{array}{ccc}
0 & 0 & 0\\
0 & 0 & 0\\
0 & 0 & |N(t)|^2
\end{array}\right)$, and \\
$|N(t)|^2=N_3(t)^2+...+N_{2n}(t)^2$.

Here $\overline\Rm$ is the Tanaka-Webster curvature and
\[
\bar R_{22}(t)=\left<\overline\Rm(v_2(t),v_1(t))v_1(t),v_2(t)\right>.
\]

Under the assumptions of the theorem, we have
\[
\bar R_{22}(t)-|N(t)|^2\geq k_1^2.
\]

Solutions of the comparison equation
\[
\begin{split}
&0=\dot{\bar S}_{0}(t)-W_0'\bar S_{0}(t)-\bar S_{0}(t)^TW_0'^T +\bar S_{0}(t)\bar C_3\bar S_{0}(t)+\bar L_0'
\end{split}
\]
can be computed explicitly using the method in \cite{Lev}. Here
\[
\bar L_0'(t)=\left(\begin{array}{ccc}
\frac{1}{4} & -\frac{a}{4} & 0\\
-\frac{a}{4} & \frac{a^2}{4} & 0\\
0 & 0 & k_1^2-1+\frac{a^2}{4}
\end{array}\right).
\]

A solution is given by
\[
\bar S_0(t)=\left(
\begin{array}{ccc}
\frac{2a^2-2a^2\cos(c_1t)+k_1^2c_1t\sin(c_1t)}{ts(t)} & \frac{a}{t} & \frac{2c_1^2-2+2(1-c_1^2)\cos(c_1t)+c_1t\sin(c_1t)}{2s(t)}\\
\frac{a}{t} & \frac{1}{t} & \frac{a}{2}\\
\frac{2c_1^2-2+2(1-c_1^2)\cos(c_1t)+c_1t\sin(c_1t)}{2s(t)} & \frac{a}{2} & \frac{c_1(\sin(c_1t)-c_1t\cos(c_1t))}{s(t)}
\end{array}\right),
\]
where $s(t)=2-2\cos(c_1t)-c_1t\sin(c_1t)$ and $c_1=\sqrt{k_1^2+a^2}$.

As $t\to 0$, it grows like
\[
\left(
\begin{array}{ccc}
\frac{12}{t^3} & \frac{a}{t} & \frac{6}{t^2}\\
\frac{a}{t} & \frac{1}{t} & \frac{a}{2}\\
\frac{6}{t^2} & \frac{a}{2} & \frac{4}{t}
\end{array}\right),
\]
so $\lim_{t\to 0}\bar S_0(t)\to \infty$ as $t\to 0$.

By the comparison theorem \cite{Ro} of matrix Riccati equations, $\bar S_0(t)\geq S_{1,0}(t)$. Note that $S_{1,0}(0)$ is the matrix defined by $-\nabla^2g(\gamma(t_0))$ which is well-defined for all small positive $t_0$ (recall that $g$ is $C^\infty$ along $\gamma$ except at the end-points). This approach does not require any short time asymptotic. It follows that the geodesic is no longer minimizing if $t>\frac{2\pi}{k_1}$. Therefore, the diameter of $M$ is less than or equal to $\frac{2\pi}{k_1}$.

Next, we look at the equation satisfied by $S_{1,2}(t)$.
\[
\begin{split}
&0=\dot S_{1,2}(t)+L'_2(t)+S_{1,1}(t)^T\bar C_3S_{1,1}(t)+S_{1,2}(t)^2+W_1'(t)^TS_{1,1}(t)\\
&-W_2'(t)^TS_{1,2}(t)+S_{1,1}(t)^TW_1'(t)-S_{1,2}(t)^TW_2'(t)\\
&=\dot S_{1,2}(t)+L'_2(t)+(S_{1,1}(t)+W_1'(t))^T\bar C_3(S_{1,1}(t)+W_1'(t))+S_{1,2}(t)^2\\
&-W_1'(t)^TW_1'(t)-W_2'(t)^TS_{1,2}(t)-S_{1,2}(t)^TW_2'(t)\\
&\geq\dot S_{1,2}(t)+S_{1,2}(t)^2 +L'_2(t)-W_1'(t)^TW_1'(t) +\frac{a}{2}J_2^TS_{1,2}(t)+\frac{a}{2}S_{1,2}(t)^TJ_2.
\end{split}
\]

Let $s(t)=\tr(S_{1,2}(t))$. After taking the trace, we obtain
\[
\begin{split}
&0\geq\dot s(t)+\frac{1}{2n-2}s(t)^2+\tr(L_2'(t)-W_1'(t)^TW_1'(t))\\
&\geq\dot s(t)+\frac{1}{2n-2}s(t)^2+k_2^2.
\end{split}
\]

Let $\bar s(t)$ be
\[
\bar s(t)=\sqrt{2n-2}\,k_2\cot\left(\frac{k_2t}{\sqrt{2n-2}}\right)
\]
It is a solution of the comparison equation $\dot{\bar s}(t)+\frac{1}{2n-2}\bar s(t)^2+k_2^2$. By the same comparison principle as above, $s(t)\leq \bar s(t)$. It follows that the diameter is bounded by $\frac{\sqrt{2n-2}\pi}{k_2}$ in this case.
\end{proof}

\begin{proof}[Proof of Theorem \ref{symplMyer}]
The proof is similar to and simpler than the one for Theorem \ref{Myer}. We give a very brief sketch.

Let $\gamma:[0,d(x,x_0)]\to M$ be a geodesic which starts from $x$ and ends at $x_0$. Let us fix an orthonormal frame $\{v_1(t),...,v_{2n}(t)\}$ defined along $\gamma$ such that $v_1(t)=\dot\gamma(t)$, $v_2(t)=\cJ\dot\gamma(t)$, and $\dot v_i(t)\in\text{span}\{v_1(t),v_2(t)\}$ for all $i=3,...,2n$.

Let $v(t)=\left(\begin{array}{ccc} v_1(t) & ... & v_{2n}(t)\end{array}\right)^T$. A computation shows that
\[
\dot v(t)=\left(\begin{array}{cc}
0 & A_1(t)\\
-A_1(t)^T & 0
\end{array}\right)v(t)
\]
where $A_1(t)=\left(\begin{array}{ccc}
0 & ... & 0\\
N_3(t) & ... & N_{2n}(t)
\end{array}\right)$ and $N_i(t)=\left<(\nabla_{\dot\gamma(t)}\cJ)\dot\gamma(t), v_i(t)\right>$.

The curve $\gamma$ satisfies $\dot\gamma(t)=-\nabla g(\gamma(t))$, where $g(x)=d(x_0,x)$. The function $g$ satisfies $|\nabla g|=1$ and a computation shows that $S_{ij}(t)=-\nabla^2g_{\gamma(t)}(v_i(t),v_j(t))$ satisfies
\[
\dot S(t)+S(t)^2-A(t)S(t)-S(t)A(t)^T+R(t)=0.
\]

Let $S(t)=\left(\begin{array}{cc}
S_0(t) & S_1(t)\\
S_1(t)^T & S_2(t)
\end{array}\right)$ and $R(t)=\left(\begin{array}{cc}
R_0(t) & R_1(t)\\
R_1(t)^T & R_2(t)
\end{array}\right)$, where $S_0(t)$ and $R_0(t)$ are $2\times 2$ blocks. The blocks $S_0(t)$ and $S_2(t)$ satisfy
\[
\begin{split}
&0=\dot S_0(t)+S_0(t)^2+S_1(t)S_1(t)^T-A_1(t)S_1(t)^T-S_1(t)A_1(t)^T+R_0(t)\\
&\geq \dot S_0(t)+S_0(t)^2-A_1(t)A_1(t)^T+R_0(t).
\end{split}
\]
and
\[
\begin{split}
&0\geq\dot s(t) +\frac{1}{2n-2}s(t)^2+\tr(-A_1(t)^TA_1(t)+R_2(t)).
\end{split}
\]
where $s(t)=\tr(S_2(t))$.

The results follow as in the proof of Theorem \ref{Myer}.

\end{proof}

\smallskip

\section{Comparison Theorems for the Closed Reeb Orbit}\label{SClosedOrbit}

In this section, we prove two comparison type theorems in the spirit of \cite{HeKa} for the closed Reeb orbit. First, a Myers' type result.

\begin{thm}\label{orbitd}
Let $C$ be a closed orbit of $\xi$ and let $\gamma:[0,T]\to M$ be a unit speed minimizing geodesic which starts from a point $x$ and ends at a point in $C$ which is closest to $x$.

\begin{enumerate}
\item Assume that (\ref{bound1}) holds. Then $T\leq \frac{\pi}{k_1}$.
\item Assume that (\ref{bound2}) holds. Then $T\leq \frac{\sqrt{2n-2}\pi}{2k_2}$.
\end{enumerate}
\end{thm}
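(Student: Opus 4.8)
The plan is to run the matrix Riccati comparison of Theorem \ref{Myer}, with the point boundary data replaced by the focal boundary data of the closed orbit $C$, in the spirit of Heintze--Karcher \cite{HeKa}.

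First I would let $g(\cdot)=d_{CC}(\cdot,C)$ be the sub-Riemannian distance to the orbit, so that along $\gamma$ one has $\dot\gamma=-\nabla_H g$ and the adapted frame together with the matrices of Lemma \ref{S1} are available verbatim. Since $\gamma$ realizes $d_{CC}(x,C)$ and ends at the closest point $p=\gamma(T)\in C$, the first variation with the endpoint free to slide along $C$ forces the terminal momentum to annihilate $T_pC=\mathbb R\xi$; as $a(t)=-\langle\nabla g,\xi\rangle_{\gamma(t)}$ is exactly this pairing, we get $a(T)=0$, and because $M$ is $K$-contact ($h=0$) Lemma \ref{geo} makes $a$ constant, so $a\equiv 0$. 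With $h=0$ and $a\equiv 0$, Lemma \ref{S1} reduces to the same equation $\dot S_1+L'+S_1C_3S_1-S_1W'^T-W'S_1=0$ used in Theorem \ref{Myer}, now with $c_1=k_1$, and I would again use the block splitting into the $3\times3$ block $S_{1,0}$ on $\{v_0,v_1,v_2\}$ and the $(2n-2)\times(2n-2)$ block $S_{1,2}$.

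The one new ingredient is the behaviour of $S_1$ as $t\to T^-$. Near $p$ the function $g$ is the distance to the geodesic curve $C$ (geodesic since $\nabla_\xi\xi=0$), so $\nabla^2 g$ obeys the focal (tube) condition of a totally geodesic submanifold rather than the conjugate (distance-to-a-point) condition that produced the bounds of Theorem \ref{Myer}; the relevant terms are governed by the Reeb twisting $\nabla_{v_i}\xi=-\tfrac{1}{2}\cJ v_i$ from Proposition \ref{h}. I would then solve the comparison equation explicitly, exactly as in Theorem \ref{Myer} via the method of \cite{Lev}, but subject to this focal initial data. For part (1) this is the $3\times3$ model solution, whose first singularity moves from $2\pi/k_1$ to $\pi/k_1$; combined with (\ref{bound1}) and the Riccati comparison theorem \cite{Ro}, it gives $S_{1,0}\le\bar S_0$ and forces $\gamma$ to lose minimality to $C$ by $t=\pi/k_1$. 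For part (2) I would trace the $S_{1,2}$ equation to obtain $\dot s+\tfrac{1}{2n-2}s^2+k_2^2\le 0$ from (\ref{bound2}), as in Theorem \ref{Myer}, and compare with the model scalar solution selected by the same focal condition, whose first singularity now sits at $\tfrac{\sqrt{2n-2}\pi}{2k_2}$.

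The hard part is precisely this focal boundary analysis: pinning down the exact asymptotics of $S_1(t)$ as $\gamma$ reaches $C$ and checking that the contact twisting enters so as to \emph{halve} the first singularity of the comparison solutions. This step is genuinely necessary, since a naive estimate of the first focal point of the transverse $(2n-2)$-block reproduces only the Theorem \ref{Myer} value; the improvement must come from the coupling in the reduced Riccati equation together with $a\equiv0$. A reassuring check is the model: in $\mathcal P(k,1)\to\mathcal B(k,1)$ the fibres (Reeb orbits) are totally geodesic, so $d_{CC}(x,C)=d_{\mathcal B(k,1)}(\pi(x),\pi(C))$, whose maximum is the diameter $\pi/k$ of $\mathcal B(k,1)$; under the model normalization this equals both claimed bounds, confirming that the factor $\tfrac{1}{2}$ relative to Theorem \ref{Myer} is sharp.
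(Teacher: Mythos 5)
Your setup is reasonable, and the observation that transversality of the minimizer at its endpoint on $C$ forces $a(T)=0$, hence $a\equiv 0$ by $K$-contactness, is correct; it is precisely the analogue of the paper's choice of initial covector. But there is a genuine gap exactly where you flag ``the hard part'': you never determine the focal boundary data of $\nabla^2 g$ at the orbit, nor exhibit the comparison solutions whose first singularity is halved, and without that neither bound is proved. The paper closes this by running the argument in the opposite direction, Heintze--Karcher style: it parametrizes geodesics by the exponential map of the normal bundle of $C$ restricted to $\ker\eta$, builds a frame with $e_1(0)=\xi$, $e_2(0)=\cJ v$, and derives the Riccati equation $\dot S+S^2+SA+A^TS+R=0$ for $S=D^{-1}\dot D+A$, where only the rows of $D$ normal to $C$ carry the factor $t$. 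The decisive computation is then that $S_{0,0}\equiv 0$ and $S_{0,1}\equiv-\tfrac12=A_{0,1}$ (the $\xi$-direction is tangent to $C$ and \emph{decouples}), so the $(\cJ\dot\gamma,\cJ\dot\gamma)$-entry obeys the clean scalar inequality $\dot S_{0,2}+S_{0,2}^2-|N'|^2+\bar R_{22}\le 0$, i.e.\ $\dot S_{0,2}+S_{0,2}^2+k_1^2\le 0$ with $S_{0,2}\sim 1/t$ at the orbit; comparison with $k_1\cot(k_1t)$ gives $T\le\pi/k_1$. Note that this runs against your guiding intuition: the improvement over Theorem \ref{Myer} does not come from extra ``coupling'' in the reduced Riccati system, but from the \emph{removal} of the coupling with the $\xi$-row, which in Theorem \ref{Myer} produces the $t^{-3}$ blow-up of $\bar S_0$ and pushes the first singularity out to $2\pi/k_1$.

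For part (2) your claim is also unsupported as stated: the scalar comparison for $s=\tr(S_2)$ with focal data $s\sim(2n-2)/t$ at the orbit yields a first singularity at $\sqrt{2n-2}\,\pi/k_2$, not at $\sqrt{2n-2}\,\pi/(2k_2)$, so the factor $\tfrac12$ cannot be obtained by simply invoking ``the same focal condition''; it requires combining the blocks (and indeed the paper's own displayed conclusion at that point of the proof reads $T\le\sqrt{2n-2}\,\pi/k_2$). Until the boundary asymptotics and the explicit comparison solutions are actually computed, both parts of your argument remain programmatic rather than a proof.
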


The second one is a volume growth estimate.

\begin{thm}\label{HK}
Assume that conditions (\ref{bound1}) and (\ref{bound2}) hold with $k_2=\frac{\sqrt{2n-2}k_1}{2}$. Let $C$ be a closed orbit of $\xi$. Let $V(C,T)$ be the neighborhood of $C$ of radius $T$
\[
V(C,T):=\vol(\{x\in M|d_{CC}(C,x)<T\}).
\]
Let $\bar V(T)$ be the corresponding volume in the model $\mathcal P(k_1,1)$. Then
$\frac{V(C,T)}{\bar V(T)}$ is non-increasing in $T$.
\end{thm}

\begin{proof}[Proof of Theorem \ref{orbitd}]
Let $\psi:U:=\{(x,v)\in\ker\eta_C|\,|v|=1\}\to M$ be the map defined as the solution of the initial value problem:
\[
\frac{D^2}{dt^2} \psi(x,tv)=0,\quad \psi(x,0)=x,\quad \frac{D}{dt} \psi(x,tv)\Big|_{t=0}=v.
\]
i.e. the restriction of the exponential map to the distribution (in this case the exponential map can be the Riemannian or the sub-Riemannian one).

For each $(x,v)$ in $U$, let $e_1(0)=\xi(x)$ and $e_2(0)=\cJ v$. Let $e_i(t)$ be defined as above along $\psi(x,tv)=:\gamma(t)$ such that $\dot e_i(t)$ is contained in $\text{span}\{e_1(t),e_2(t)\}$, where $i=3,...,2n$. Let $E(t)=\left(\begin{array}{ccccc}
e_1(t) & ... & e_{2n}(t)
\end{array}\right)^T$ and let $A(t)$ be a family of matrices defined by $\dot E(t)=A(t)E(t)$. A computation shows that
\[
\begin{split}
&\dot e_1(t)=\nabla_{\dot\gamma}\xi=-\frac{1}{2}e_2(t),\\
&\dot e_2(t)= (\nabla_{\dot\gamma}\cJ)\dot\gamma(t) =\frac{1}{2}e_0(t)+\sum_{i\geq 3}N_i(t)e_i(t),
\end{split}
\]
where $N_i(t)=\left<(\nabla_{e_1(t)}\cJ)e_1(t),e_i(t)\right>$.

Hence, $A(t)=\left(\begin{array}{cc}
A_0(t) & A_1(t)\\
-A_1(t)^T & 0
\end{array}\right)$, where $A_0(t)=\left(\begin{array}{cc}
0 & -\frac{1}{2}\\
\frac{1}{2} & 0
\end{array}\right)$, $A_1(t)=\left(\begin{array}{c}
0\\
N'
\end{array}\right)$, and $N'=\left(\begin{array}{ccc}N_3 & ... & N_{2n}\end{array}\right)$.

For $i\neq 1$, let $\sigma_i$ be the curve defined by $\sigma_i(0)=tv$ and $\bar e_i(t)=\dot\sigma_i(0)$, where $\bar e_i(t)$ is the vector induced by $e_i(0)$. For $i=1$, let $\sigma_1(s)=d\Phi_s(v)$. Let $d\psi_{(x,tv)}(\bar E(t))=B(t)E(t)$.

When $t=0$, $B(0)=I$. Since $\Phi_s$ is an isometry, $\Phi_s(\gamma)$ is also a geodesic if $\gamma$ is. It follows that $\psi(d\Phi_s(tv))=\Phi_s(\psi(tv))$. Therefore,
\[
\sum_iB_{1i}(t)e_i(t)=d\psi_{(x,tv)}(\bar e_1(t))=\frac{D}{ds}\psi(d\Phi_s(tv))\Big|_{s=0}=\xi(\psi(tv))
\]
So $B_{1i}(t)=\delta_{1i}$.

Let $t\mapsto\varphi_t(x)$ be the geodesic connecting from $x$ to the point in closed orbit $C$ which is closest to $x$. It satisfies $\dot\varphi_t(x) =\nabla_Hg(\varphi_t(x))=\nabla g(\varphi_t(x))$, where $g(x)=-d(C,x)$. Note that the curve $\psi(x,tv)$ coincides with $\varphi_{T-t}(\psi(x,Tv))$ if $t\in [0,T]\mapsto\psi(x,tv)$ is minimizing.

For $i\neq 1$,
\[
\begin{split}
&\frac{D}{dt}d\psi_{(x,tv)}(t\bar e_i(t)) =-T\nabla^2f(d\varphi_{T-t}(d\psi_{(x,Tv)}(\bar e_i(0))))\\
&=-T\sum_{j,k}B_{ij}(T)C_{jk}(T-t)\nabla^2g(e_k(t))
\end{split}
\]
and
\[
\begin{split}
&\sum_j(B_{ij}(t)e_j(t) +t\dot B_{ij}(t)e_j(t) +t\sum_kB_{ij}(t)A_{jk}(t)e_k(t))\\
&=\sum_j\frac{D}{dt}(tB_{ij}(t)e_j(t))\\
&=-T\sum_{j,k}B_{ij}(T)C_{jk}(T-t)\nabla^2g(e_k(t))
 \end{split}
\]

By setting $t=T$, it follows that
\[
-D(T)^{-1}\dot D(T)-A(T)=TF(0)
\]
is symmetric, where $D(t)=\left(\begin{array}{c}
B_1(t)\\
tB_2(t)\\
\vdots\\
tB_{n}(t)
\end{array}\right)$.

For $i\neq 1$, we have
\[
\begin{split}
&0=\frac{D}{ds}\frac{D}{dt}\frac{d}{dt}\psi(x,t\sigma_i(s))\Big|_{s=0}\\
&=\frac{D}{dt}\frac{D}{dt}d\psi_{(x,tv)}(t\bar e_i(0)) +\Rm(d\psi_{(x,tv)}(t\bar e_i(0)),\dot\gamma)\dot\gamma\\
&=\sum_j\frac{D}{dt}\left(\dot D_{ij}(t)e_j(t)+D_{ij}(t)\sum_kA_{jk}(t)e_k(t)\right) +\sum_{k,l}D_{ik}(t)R_{kl}(t)e_l(t)\\
&=\sum_j\Big(\ddot D_{ij}(t)e_j(t)+2\dot D_{ij}(t)\sum_kA_{jk}(t)e_k(t)\\
& + D_{ij}(t)\sum_k(\dot A_{jk}(t)+\sum_lA_{jl}A_{lk})e_k(t)\Big) +\sum_{k,l}D_{ik}(t)R_{kl}(t)e_l(t).
\end{split}
\]

For $i=1$, let $\Phi_s$ be the flow of $\xi$.
\[
\begin{split}
&0=\frac{D}{ds}\frac{D}{dt}\frac{d}{dt}\psi(d\Phi_s(tv))\Big|_{s=0}\\
&=\frac{D}{dt}\frac{D}{dt}d\psi_{(x,tv)}(\bar e_0(t)) +\Rm(d\psi_{(x,tv)}(\bar e_0(t)),\dot\gamma)\dot\gamma\\
&=\sum_j\frac{D}{dt}(\dot D_{0j}(t)e_j(t)+D_{0j}(t)\sum_kA_{jk}(t)e_k(t)) +\sum_{k,l}D_{0k}(t)R_{kl}(t)e_l(t)\\
&=\sum_j\Big(\ddot D_{0j}(t)e_j(t)+2\dot D_{0j}(t)\sum_kA_{jk}(t)e_k(t)\\
& + D_{0j}(t)\sum_k(\dot A_{jk}(t)+\sum_lA_{jl}A_{lk})e_k(t)\Big) +\sum_{k,l}D_{0k}(t)R_{kl}(t)e_l(t).
\end{split}
\]

By combining the above two equations, we obtain
\[
D(t)^{-1}\ddot D(t)+2D(t)^{-1}\dot D(t)A(t)+\dot A(t)+A(t)^2+R(t)=0.
\]

Let $T(t)=D(t)^{-1}\dot D(t)$ and $S(t)=T(t)+A(t)$. The matrix $S(t)$ satisfies
\begin{equation}\label{mriccati}
\dot S(t)+S(t)^2+S(t)A(t)+A(t)^TS(t)+R(t)=0.
\end{equation}

Let $S(t)=\left(
\begin{array}{cc}
S_0(t) & S_1(t)\\
S_1(t)^T & S_2(t)
\end{array}\right)$, where $S_0(t)$ is a $2\times 2$ block.
\begin{equation}\label{riccati1}
\begin{split}
&0=\dot S_0(t)+S_0(t)^2+S_1(t)S_1(t)^T+S_0(t)A_0(t)\\
&-S_1(t)A_1(t)^T +A_0(t)^TS_0(t)-A_1(t)S_1(t)^T+R_0(t)\\
&=\dot S_0(t) +S_0(t)^2+(S_1(t)-A_1(t))(S_1(t)-A_1(t))^T\\ &+S_0(t)A_0(t)+A_0(t)^TS_0(t)-A_1(t)A_1(t)^T+R_0(t)\\
&\geq\dot S_0(t)+S_0(t)^2+S_0(t)A_0(t)\\
&+A_0(t)^TS_0(t)-A_1(t)A_1(t)^T+R_0(t).
\end{split}
\end{equation}

Let us split $S_0(t)$ further $S_0(t)=\left(
\begin{array}{cc}
S_{0,0}(t) & S_{0,1}(t)\\
S_{0,1}(t) & S_{0,2}(t)
\end{array}
\right)$. A computation shows that $S_{0,0}=0$, $S_{0,1}=-1/2$, and $A_{0,1}=-1/2$.

The function $S_{0,2}(t)$ satisfies
\begin{equation}\label{riccati2}
\begin{split}
&0\geq\dot S_{0,2}(t)+S_{0,1}(t)^2 +S_{0,2}(t)^2\\
&+2S_{0,1}(t)A_{0,1}(t)-|N'(t)|^2+R_{22}(t)\\
&=\dot S_{0,2}(t)+S_{0,2}(t)^2-|N'(t)|^2+\bar R_{22}(t)\\
&\geq\dot S_{0,2}(t) +S_{0,2}(t)^2+k_1^2.
\end{split}
\end{equation}

A solution of the comparison equation $\dot{\bar S}_{0,2}(t)+\bar S_{0,2}(t)^2+k_1^2=0$ with the condition $\lim_{t\to 0}\bar S_{0,2}(t)=\infty$ is given by
\[
\bar S_{0,2}(t)=k_1\cot(k_1t).
\]
By the comparison principle of Riccati equations \cite{Ro}, $S_{0,2}(t)\leq\bar S_{0,2}(t)$. Since $g$ is $C^\infty$ along minimizing geodesics except at the endpoints, it follows that $T\leq \frac{\pi}{k_1}$.

Similarly, the block $S_2(t)$ satisfies
\begin{equation}\label{riccati3}
\begin{split}
&0=\dot S_2(t)+(S_1(t)+A_1(t))^T(S_1(t)+A_1(t))\\
&+S_2(t)^2-A_1(t)^TA_1(t)+R_2(t)\\
&\geq \dot S_2(t)+S_2(t)^2-A_1(t)^TA_1(t)+R_2(t).
\end{split}
\end{equation}
Let $s(t)=\tr(S_2)$. It follows that
\begin{equation}\label{riccati4}
\begin{split}
&0\geq \dot s(t)+\frac{1}{2n-2}s(t)^2+k_2^2.
\end{split}
\end{equation}

A solution to the comparison equation $\dot s+\frac{1}{2n-2}s^2+k_2^2=0$ which satisfies the condition $\lim_{t\to 0}\bar s(t)=\infty$ is given by
\[
\bar s(t)=\sqrt{2n-2}k_2\cot(k_2t/\sqrt{2n-2}).
\]
Once again, by the comparison principle, we have $\tr(S(t))\leq\tr(\bar S_0(t))+s(t)=\tr(\bar S(t))$. Therefore, $T\leq\frac{\sqrt{2n-2}\pi}{k_2}$.
\end{proof}

\begin{proof}[Proof of Theorem \ref{HK}]
Let $U=\{(x,v)\in\ker\eta|x\in C\text{ and } |v|=1\}$. Below, we denote the geometric object in the case of the model $\mathbb{CP}^n$ by adding a bar above the symbol. For instance $\bar U$ denotes the set $U$ in the case of the model.

Let $r:U\to\Real$ be the first time for which the curve $t\mapsto\psi(x,tv)$ is no longer minimizing.
\[
\begin{split}
&V(C,T):=\int_{\psi(\{(x,tv)|(x,v)\in U\text{ and }t\in[0,\min\{T,r(x,v)\}]\})}\vol\\
&=\int_U\int_0^{\min\{T,r(x,v)\}}\det(B(t))dt\\
&=\int_U\int_0^{T}\det(B(t))dt.
\end{split}
\]
Here we extend $\det(B(t))$ by zero when $t>r(x,v)$.

By using the fact that $B_{1i}(t)=\delta_{1i}$ and $\tr(S)\leq\tr(\bar S_0)+s=\tr(\bar S)$,
\[
\frac{d}{dt}\frac{\det(B(t))}{\det(\bar B(t))}=\frac{\det(B(t))\det(\bar B(t))(\tr(S)-\tr(\bar S))}{\det(\bar B(t))^2}\leq 0,
\]
where $\bar B(t)$ is $B(t)$ in the case when $M$ is the complex projective space $\mathbb{CP}^n$.

It follows that $\frac{\det(B(t))}{\det(\bar B(t))}$ is non-increasing. This also holds when $t>r(x,v)$. The average with respect to $\det(\bar B(t))dt$ is also non-increasing. It follows that
\[
T\mapsto\frac{\vol{(\bar U)}\int_U\int_0^T\det(B(t))dt}{\vol{(U)}\int_{\bar U}\int_0^T\det(\bar B(t))dt}=\frac{\vol(\bar U)V(C,T)}{\vol(U)\bar V(T)}
\]
is also non-increasing.
\end{proof}

\smallskip

\section{The Equality case}\label{SEqual}

By Theorem \ref{HK},
\[
\frac{V(C,\frac{\pi}{k_1})}{\text{len}(C)}\leq \frac{\bar V(\frac{\pi}{k_1})}{\text{len}(\bar C)},
\]
where $\text{len}(C)$ and $\text{len}(\bar C)$ are the lengths of closed orbits $C$ and $\bar C$ in $M$ and the model $\mathcal P(k_1,1)$, respectively. This section is devoted to the proof of the following key lemma which deals with the case when the above inequality becomes an equality.

\begin{lem}\label{key}
Let $T=\frac{\pi}{k_1}$. Assume that $m:=\frac{\bar V(T)}{V(C,T)}=\frac{\text{len}(\bar C)}{\text{len}(C)}$. Let $X$ be the set of points in $M$ which are of distance $\frac{\pi}{k_1}$ away from $C$. Then
\begin{enumerate}
    \item $m$ is a positive integer,
    \item $X$ is a totally geodesic submanifold of $M$,
    \item the tangent bundle $TX$ of $X$ is invariant under the tensor $\cJ$,
    \item the Reeb field is tangent to $X$,
    \item the exponential map is a $m$-fold covering from the set $P_x:=\{(x,v)\in\ker\eta|\,|v|=\pi/k_1\}$ to $X$ for each $x$ in $C$,
    \item the $(2n-1)$-dimensional volume of $X$ is equal to that of the corresponding submanifold $\bar X$ in the model $\mathcal P(k_1,m)$.
\end{enumerate}
\end{lem}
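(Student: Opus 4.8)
The plan is to push the equality hypothesis back through every inequality used in the proofs of Theorem \ref{HK} and Theorem \ref{orbitd}, so that all the Riccati comparisons become equalities, and then read off the geometry from the now-rigid Jacobi data. Recall from the proof of Theorem \ref{HK} that $Q(T):=\frac{\vol(\bar U)\,V(C,T)}{\vol(U)\,\bar V(T)}$ is non-increasing, and that $\frac{\vol(\bar U)}{\vol(U)}=\frac{\text{len}(\bar C)}{\text{len}(C)}=m$ since $\vol(U)=\text{len}(C)\,\omega_{2n-1}$. The short-time asymptotics of $\det B(t)$ agree to leading order with the model (they depend only on the common first-order data along $C$), so $\lim_{T\to 0^+}Q(T)=1$, while the equality assumption gives $V(C,\pi/k_1)/\bar V(\pi/k_1)=1/m$, i.e. $Q(\pi/k_1)=1$. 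A non-increasing function equal to $1$ at both ends is identically $1$, whence $\frac{d}{dt}\frac{\det B}{\det\bar B}=0$ and $\frac{\det B(t)}{\det\bar B(t)}\equiv 1$ for almost every $(x,v)\in U$ and all $t\in(0,\pi/k_1)$; by the smoothness of $g$ along minimizers (Section \ref{subintro}) this then holds for every $(x,v)$ and every $t<r(x,v)$, so in particular $\tr(S(t))=\tr(\bar S(t))$ everywhere.

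Next I would trace the inequalities \eqref{riccati1}--\eqref{riccati4} backwards. Equality in $\tr S=\tr\bar S$ forces each dropped nonnegative term to vanish: from the $S_0$-block one gets $S_1(t)=A_1(t)$, from the $S_2$-block $S_1(t)=-A_1(t)$, hence $A_1\equiv 0$ (equivalently the transverse part $N'$ of $\mathcal N$ vanishes, reflecting that $M$ becomes Sasakian-like along these geodesics) and $S_1\equiv 0$; the Cauchy--Schwarz step forces $S_2(t)=\tfrac{s(t)}{2n-2}I$; and the curvature hypotheses \eqref{bound1}, \eqref{bound2} are saturated, $\bar R_{22}(t)-|N'(t)|^2\equiv k_1^2$ and the transverse trace $\equiv k_2^2$. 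By the comparison principle \cite{Ro}, together with the common initial blow-up at $t\to 0^+$, the solution of \eqref{mriccati} then coincides identically with the model, $S_{0,2}(t)=k_1\cot(k_1 t)$ and $s(t)=\sqrt{2n-2}\,k_2\cot(k_2 t/\sqrt{2n-2})$. In particular every normal geodesic has first focal time exactly $\pi/k_1$, so $r\equiv\pi/k_1$ and $X=\psi(U\times\{\pi/k_1\})$ with every point of $X$ attained.

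With $k_2=\tfrac{\sqrt{2n-2}\,k_1}{2}$ the model data show that as $t\uparrow \pi/k_1$ one has $S_{0,2}(t)\to-\infty$ while $s(\pi/k_1)=0$; thus exactly the single Jacobi field in the $\cJ\dot\gamma$-direction degenerates at $\pi/k_1$, whereas the Reeb Jacobi field (recall $B_{1i}=\delta_{1i}$, coming from the isometric Reeb flow $\Phi_s$) and the $2n-2$ transverse fields stay nondegenerate. Hence $d\psi_{(\cdot,\pi/k_1)}$ has constant rank $2n-1$, so $X$ is a smooth $(2n-1)$-dimensional submanifold whose tangent space is spanned by the surviving Jacobi fields. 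This gives at once that $\xi$ is tangent to $X$ (claim 4), and since the radial and focusing directions are precisely $\dot\gamma$ and $\cJ\dot\gamma$ while $\cJ$ preserves $\ker\eta$ and $\{\dot\gamma,\cJ\dot\gamma\}^\perp$, the splitting $TX=\Real\xi\oplus(\{\dot\gamma,\cJ\dot\gamma\}^\perp\cap\ker\eta)$ is $\cJ$-invariant (claim 3). For the totally geodesic property (claim 2) I would identify the second fundamental form of $X$ with the limit as $t\uparrow\pi/k_1$ of the shape operators $-S(t)$ of the smooth tubes $\{d_{CC}(C,\cdot)=t\}$ along the nondegenerate directions; on the transverse block $S_2(t)=\tfrac{s(t)}{2n-2}I\to 0$, and along $\xi$ the Reeb orbit is itself a geodesic tangent to $X$, so $\mathrm{II}_X=0$.

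Finally, fixing $x\in C$, the map $\psi_x:P_x\to X$ is a proper local diffeomorphism between compact connected $(2n-1)$-manifolds, hence a finite covering of some degree $d\in\Real$ that is a positive integer. By the rigidity just established, the pulled-back metric $\psi_x^*g_X$ equals the corresponding pull-back in $\mathcal P(k_1,1)$, where the analogous map onto $\bar X$ (the Hopf $S^{2n-1}$ over the polar $\mathbb{CP}^{n-1}$ of Section \ref{Hopf}) is a diffeomorphism; thus $\vol_{2n-1}(P_x,\psi_x^*g_X)=\vol_{2n-1}(\bar X\text{ in }\mathcal P(k_1,1))$, so $\vol_{2n-1}(X)=\tfrac1d\vol_{2n-1}(\bar X\text{ in }\mathcal P(k_1,1))$. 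Comparing this with the disintegration $V(C,T)=\int_U\int_0^T\det B\,dt$ (now with $\det B\equiv\det\bar B$ and $r\equiv\pi/k_1$) against the same computation in the model identifies $d$ with $m=\tfrac{\text{len}(\bar C)}{\text{len}(C)}$, giving claim 1 ($m=d$ a positive integer), claim 5 (the $m$-fold covering), and, since fibers in $\mathcal P(k_1,m)$ are $1/m$ as long, claim 6, $\vol_{2n-1}(X)=\vol_{2n-1}(\bar X)$ in $\mathcal P(k_1,m)$. The main obstacle is the passage from radial to submanifold rigidity: the equality hypotheses pin down only the sectional curvatures in planes containing $\dot\gamma$ together with one transverse trace, not the full ambient curvature tensor, so the delicate points are establishing that the focal locus is genuinely a clean embedded submanifold of constant rank and that $\psi_x$ is an honest covering of integer degree $m$ — which is exactly where the vanishing of $S_1-A_1$, the proportionality $S_2=\tfrac{s}{2n-2}I$, and the explicit Hopf geometry of Section \ref{Hopf} must all be used, rather than merely the scalar trace comparison that sufficed for Theorems \ref{orbitd} and \ref{HK}.
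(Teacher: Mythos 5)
Your first half --- forcing equality in \eqref{riccati1}--\eqref{riccati4}, deducing $S_1=A_1=0$, $N'=0$, $S_{0,2}=k_1\cot(k_1t)$, $S_2=c\cot(ct)I$, and then recovering $R(t)$ and $B(t)$ from the now-rigid Riccati equation \eqref{mriccati} --- is exactly the paper's argument, and your observation that $Q(T)\to 1$ as $T\to 0^+$ correctly supplies the step the paper leaves implicit. The gap is in the second half. Your route to claims (1) and (5) is: $\psi_x:P_x\to X$ is a proper local diffeomorphism, hence a covering of some integer degree $d$, and then ``comparing with the disintegration $V(C,T)=\int_U\int_0^T\det B\,dt$ identifies $d$ with $m$.'' That last step does not work: the disintegration only reproduces $V(C,T)/\bar V(T)=\vol(U)/\vol(\bar U)=1/m$, which is the hypothesis you started from, and it carries no information about the degree $d$ because $X$ has measure zero in $M$. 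Knowing $d$ is an integer does not show the ratio $\text{len}(\bar C)/\text{len}(C)$ is that integer. The missing ingredient is the paper's rotation argument: setting $w(s)=\frac{\pi}{k_1}\bigl(\cos(cs)\,d\Phi_s(v)+\sin(cs)\,\cJ d\Phi_s(v)\bigr)$, one computes from \eqref{D} that $\exp(w(s))$ is a \emph{fixed} point $p\in X$ while, by \eqref{dD}, the terminal velocities $\frac{d}{dt}\exp(tw(s))\big|_{t=T}$ rotate at speed $c$ around the unit normal circle of $X$ at $p$, closing up after $s=4\pi/k_1^2=\text{len}(\bar C)$. Uniqueness of geodesics then forces $\Phi_{4\pi/k_1^2}(x)=x$, so $\text{len}(C)$ divides $\text{len}(\bar C)$, and the fibre of $\psi_x$ over $p$ has exactly $m$ points. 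Without this periodicity fact there is no link between the covering degree and the orbit lengths.

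The same omission undermines claim (2). $X$ has codimension two, with normal space spanned by $\dot\gamma(T)$ and $\cJ\dot\gamma(T)$; your limit-of-tube-shape-operators argument (via $S_2(t)\to 0$) only controls the component of the second fundamental form in the radial direction $\dot\gamma(T)$, and ``the Reeb orbit is a geodesic'' only gives $\mathrm{II}(\xi,\xi)=0$, not the mixed terms. The paper closes this by again using the rotation: \emph{every} unit normal of $X$ at $p$ arises as $\frac{d}{dt}\exp(tw(s))\big|_{t=T}$ for some $s$, so the radial computation $\frac{D}{ds}\frac{d}{dt}\exp(tv(s))\big|_{t=T,s=0}$, whose tangential block is the vanishing lower-right block of $\dot D(T)$ in \eqref{dD}, already gives $\mathrm{II}=0$ for all normal directions. (Also note that the limit of the shape operators of the $2n$-dimensional tubes is not literally the second fundamental form of the $(2n-1)$-dimensional focal set; the $\dot D(T)$ computation is the correct object.) Your claims (3) and (4), and the full-rank/immersion statement, are fine and agree with the paper's reading of the rows and columns of $D(T)$.
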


\begin{proof}
The equality $\frac{V(C,T)}{\text{len}(C)} =\frac{\bar V(T)}{\text{len}(\bar C)}$ implies that all inequalities including (\ref{riccati1}), (\ref{riccati2}), (\ref{riccati3}), and (\ref{riccati4}) become equality and the coefficients equal to that of the model case.
First, we have $r(x,v)=\frac{\pi}{k_1}$. It follows from (\ref{riccati1}) that $S_1(t)=A_1(t)$, from (\ref{riccati2}) that $N'=0$ and $\bar R_{22}(t)=k_1^2$ and $S_{0,2}(t)=k_1\cot(k_1t)$, from (\ref{riccati3}) that $S_1(t)=-A_1(t)=0$, and from (\ref{riccati4}) that $S_2(t)=c\cot(ct)I$, where $c=\frac{k_2}{\sqrt{2n-2}}=\frac{k_1}{2}$.

The above arguments show that $S(t)$ and $A(t)$ coincides with the corresponding quantities in the model case. By substituting this into (\ref{mriccati}), it follows that the same holds for $R(t)$. Since $B(t)$ satisfies the same equations and initial condition as the corresponding quantity in the model case, the two quantities coincide as well. In summary, we obtain the followings:
\begin{equation}\label{SAB}
\begin{split}
&S(t)=\left(
\begin{array}{ccc}
0 & -1/2 & 0\\
-1/2 & k_1\cot(k_1t) & 0\\
0 & 0 & c\cot(ct)I
\end{array}\right), \\
&A(t)=\left(
\begin{array}{ccc}
0 & -1/2 & 0\\
1/2 & 0 & 0\\
0 & 0 & 0_{(2n-1)\times(2n-1)}
\end{array}\right)\\
&B(t)=\left(
\begin{array}{ccc}
1 & 0 & 0\\
-\frac{1-\cos(k_1t)}{k_1^2t} & \frac{\sin(k_1t)}{k_1t} & 0\\
0 & 0 & \frac{\sin(ct)}{ct}I
\end{array}\right)
\end{split}
\end{equation}

Therefore,
\begin{equation}\label{D}
D(T)=\left(
\begin{array}{ccc}
1 & 0 & 0\\
-\frac{2}{k_1^2} & 0 & 0\\
0 & 0 & \frac{2}{k_1}I
\end{array}\right)
\end{equation}

\begin{equation}\label{dD}
\dot D(T)=\left(
\begin{array}{ccc}
0 & 0 & 0\\
0 & -1 & 0\\
0 & 0 & 0
\end{array}\right).
\end{equation}

It follows that the derivative of the exponential map restricted to $P_x$ has full rank and so it is an immersion.
The second row of $D(T)$ in (\ref{D}) shows that the derivative of the exponential map sends $e_1(0)$ to the Reeb field $\xi$. Therefore, $\xi$ is tangent to the submanifold $X$. Since the second column of $D(T)$ vanishes and the vector field $e_2(T)$ is orthogonal to $X$. The third and the fourth assertions follow.

Let $(\Phi_\tau(x),d\Phi_\tau(v))$ be another vector in $P_{\Phi_\tau(x)}$. Let $\gamma$ be the geodesic connecting $x$ to $X$. Since $\Phi_\tau$ is distance preserving and $\xi$ is tangent to $X$, $\Phi_\tau\circ\gamma$ has the same length as $\gamma$ and it connects $\Phi_\tau(x)$ to $X$. It follows that the image of $P_{\Phi_\tau(x)}$ under the exponential map is contained in that of $P_x$. By symmetry, they are the same.

Let $v(t)=d\Phi_t(v)$. Since $\nabla_\xi\cJ=0$. It follows that $\cJ v(t)=d\Phi_t(\cJ v)$. Indeed,
\[
\begin{split}
&\frac{D}{dt}v(t)=\frac{D}{ds}\xi(\Phi_t(\gamma(s)))\Big|_{s=0}=-\frac{1}{2}\cJ(d\Phi_t(v)),
\end{split}
\]
where $\dot\gamma(0)=v$.

It follows that
\[
\frac{D}{dt}\left(\cJ v(t)\right)=\frac{1}{2}d\Phi_t(v)=\frac{D}{dt}d\Phi_t(\cJ v)
\]
and the claim follows from this and that two vector fields coincide at $t=0$.

Let $w(t) = \frac{\pi}{k_1}\left(\cos(ct)d\Phi_t(v)+\sin(ct)\cJ d\Phi_t(v)\right)$. A computation shows that
\[
\begin{split}
\frac{d}{dt}w(t)\Big|_{t=t_0}&=\frac{d}{dt}\left(\frac{\pi}{k_1}\left(\cos(ct_0)d\Phi_t(v)+\sin(ct_0)\cJ d\Phi_t(v)\right)\right)\\
&+\frac{c\pi}{k_1}\left(-\sin(ct_0)d\Phi_{t_0}(v)+\cos(ct_0)\cJ d\Phi_{t_0}(v)\right)
\end{split}
\]

The first term on the right equals to $\bar e_1(0)$ and the second one equals to $c\bar e_2(0)$ defined at $w(t_0)$. This discussion together with (\ref{D}) gives
\[
\begin{split}
&\frac{d}{dt}\exp\left(w(t)\right)\Big|_{t=t_0} =d\exp_{w(t)}\left(\frac{d}{dt}w(t)\right)\Big|_{t=t_0}=0.
\end{split}
\]
if $c=\frac{k_1^2}{2}$.

This shows that $\exp(w(t))$ is a point in $X$ independent of time $t$. Moreover, by (\ref{dD}),
\[
\begin{split}
&\frac{D}{ds}\frac{d}{dt}\exp(t w(s))\Big|_{t=T,s=0} =\frac{D}{dt}\frac{d}{ds}\exp(t w(s))\Big|_{t=T,s=0}=-c e_2(0).
\end{split}
\]
Therefore, the tangent vectors $\frac{d}{dt}\exp(t w(s))\Big|_{t=T}$ is rotating at speed $c$ in a circle in the normal bundle of $X$ at $\exp(w(t))$. Since the speed of rotation is $c$, it takes time $\frac{4\pi}{k_1^2}=\text{len}(\bar C)$ to rotate around once.

On the other hand, since
\[
\frac{d}{dt}\exp(t w(0))\Big|_{t=T}=\frac{d}{dt}\exp(t w(4\pi/k_1^2))\Big|_{t=T},
\]
$w(4\pi/k_1^2)=w(0)$ by the uniqueness of the geodesic. It follows that $m$ is the number of time the curve $s\in[0,4\pi/k_1^2)\mapsto\Phi_s(x)$ hits the point $x$ and so it is a positive integer proving the first statement. By continuity, this integer $m$ is the same for each $v$ in the normal bundle. Therefore, the exponential map restricted to $P_x$ is a $m$-fold covering of the submanifold $X$.

Since the volume of $X$ can be written in terms of $B(T)$ and $m$, it equals to the corresponding volume in the model space $\mathcal P(k_1,m)$. This is the last assertion.

Finally, the fact that $X$ is totally geodesic follows from (\ref{dD}). Indeed, from the above discussion, any normal vector of $X$ is of the form $\frac{d}{dt}\exp(t v)\Big|_{t=T}$, where $v$ is any unit vector in the normal bundle of $C$. The shape operator is given by
\[
\frac{D}{ds}\frac{d}{dt}\exp(t v(s))\Big|_{t=T,s=0}=\frac{D}{dt}\frac{d}{ds}\exp(t v(s))\Big|_{t=T,s=0}.
\]
The above quantity is essentially given by the lower right diagonal block of $\dot D(T)$ which vanishes. So the shape operator is zero everywhere and $X$ is totally geodesic.
\end{proof}

\smallskip

\section{Proof of Theorem \ref{main}}\label{endproof}

By Lemma \ref{key}, the submanifold $X$ is totally geodesic submanifold of $M$, the Reeb field $\xi$ is tangent to $X$, and the tangent bundle $TX$ is invariant under $\cJ$. It follows that all the geometric structures restrict to $X$ and give it a $K$-contact manifold structure of dimensions two lower than that of $M$. Moreover, since $X$ is totally geodesic, the curvature and hence the conditions in Theorem \ref{main} is preserved. Therefore, by induction $X$ is isometrically contactomorphic to the model $\mathcal P(k_1,m)$.
The exponential map restricted to the subset
\[
\{(x,v)\in\ker\eta||v|<\pi/k_1\}
\]
of the normal bundle of $C$ defines a diffeomorphism onto $M-X$. This map together with the corresponding exponential map of the model space $\mathcal P(k_1,m)$ defines a diffeomorphism $\Psi_1$ from $M-X$ to $\mathcal P(k_1,m)-\bar X$. By (\ref{SAB}), the matrix $B(t)$, which is the matrix representation of the derivative of the exponential map with respect to the orthonormal moving frames, agrees with the corresponding one in the model. Therefore, $\Psi_1$ is an isometry. The first row of $B(t)$ shows that $\Psi_1$ sends the Reeb field on $M$ to the Reeb field on the model, so it is a contactomorphism.

Finally, by induction (see below for the argument in the three dimensional case), both submanifolds $X$ and $\bar X$ are isometrically contactomorphic to $\mathcal P(k_1,m)$ of dimension $2n-1$. It follows that the exponential maps restricted to the normal bundles of $X$ and $\bar X$ defines a isometric contactomophism $\Psi_2$ from $M-C$ to $\mathcal P(k_1,m)-\bar C$. The analysis in the proof of Lemma \ref{key} shows that the $\Psi_1$ and $\Psi_2$ paste together to form a map from $M$ to $\mathcal P(k_1,m)$.

It remains to consider the three dimensional case. In this case $X$ and $\bar X$ are closed Reeb orbits with the length of the later one equals to $m$-times of that of the former one. The same argument used above works. This finishes the proof.

\smallskip

\section{Appendix}

In this section, we recall several known formulas which are needed for this paper. The proof of them can be found in \cite{Bl} though the formulas here have sightly different constants since there are differences in the notations.

Let us first recall that $M$ is a contact metric manifold. It means that $M$ is equipped with tensors $J, \left<\cdot,\cdot\right>, \eta, \xi$ such that $\eta(\xi)=1$, $d\eta(\xi,\cdot)=0$, $\left<\xi,X_1\right>=0$, $\left<\cJ X_1,\cJ X_2\right>=\left<X_1,X_2\right>$, and $\left<X_1,\cJ X_2\right>=d\eta(X_1,X_2)$ for all tangent vectors $X_1$ and $X_2$ in $\ker\eta$.

Recall that the Nijenhuis tensor is defined by
\[
[\cJ,\cJ](u,v)=\cJ^2[u,v]+[\cJ u,\cJ v]-\cJ[\cJ u,v]-\cJ[u,\cJ v].
\]

\begin{prop}\label{J}
The followings hold for all $u,v,w$ in $TM$.
\begin{enumerate}
\item $\nabla_\xi\xi=0$,
\item $[\cJ,\cJ](v,w) =(\nabla_{\cJ v}\cJ)w-(\nabla_{\cJ w}\cJ)v+\cJ(\nabla_w\cJ)v-\cJ(\nabla_v\cJ)w$,
\item $\left<(\nabla_u \cJ) w, v\right>+\left<(\nabla_v \cJ) u, w\right>+\left<(\nabla_w \cJ) v, u\right>=0$,
\item $2\left<(\nabla_u\cJ)v,w\right>=\left<[\cJ,\cJ](v,w),\cJ u\right>+d\eta(\cJ v, u)\eta(w)-d\eta(\cJ w,u)\eta(v)$,
\item $\nabla_\xi \cJ=0$,
\end{enumerate}
where $\nabla$ denotes the Levi-Civita connection.
\end{prop}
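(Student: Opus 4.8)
The plan is to establish the five assertions in the stated order, since (5) will be an immediate consequence of (4), which in turn rests on (1)--(3). Throughout I would use only that $\nabla$ is torsion-free and metric, that $\cJ$ is skew-adjoint on $TM$ with $\cJ\xi=0$ and $\cJ^2=-I+\eta\otimes\xi$, and the defining relations $\eta(\xi)=1$, $d\eta(\xi,\cdot)=0$, $d\eta(X,Y)=\langle X,\cJ Y\rangle$. For (1), note first that $\langle\nabla_\xi\xi,\xi\rangle=\frac12\xi|\xi|^2=0$, so $\nabla_\xi\xi\in\ker\eta$. For horizontal $X$, torsion-freeness gives $\langle\xi,\nabla_\xi X\rangle=\langle\xi,\nabla_X\xi\rangle+\eta([\xi,X])=\eta([\xi,X])$, and the Koszul expansion of $d\eta(\xi,X)$ together with $d\eta(\xi,\cdot)=0$ and $\eta(X)=0$ forces $\eta([\xi,X])=0$. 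Differentiating $\langle\xi,X\rangle=0$ along $\xi$ then yields $\langle\nabla_\xi\xi,X\rangle=0$, hence $\nabla_\xi\xi=0$.

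Assertion (2) is purely algebraic: expanding every bracket in $[\cJ,\cJ]$ via $[a,b]=\nabla_a b-\nabla_b a$ and substituting $\nabla_a(\cJ b)=(\nabla_a\cJ)b+\cJ\nabla_a b$, all the pure covariant-derivative contributions cancel in pairs, leaving exactly the four $\nabla\cJ$ terms on the right. For (3) I would identify the fundamental two-form $\Phi(X,Y):=\langle X,\cJ Y\rangle$ with $d\eta$ on all of $TM$: they agree on $\ker\eta$ by hypothesis and both vanish when an argument is $\xi$ (for $\Phi$ because $\cJ\xi=0$ and $\cJ$ is skew, for $d\eta$ by assumption). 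Since $d\eta$ is exact, $d\Phi=0$; writing $d\Phi(u,v,w)$ as the cyclic sum of $(\nabla_u\Phi)(v,w)=\langle(\nabla_u\cJ)w,v\rangle$ (valid for torsion-free $\nabla$) reproduces the left-hand side of (3), which is therefore zero.

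The main step is (4), for which I would use a Koszul/Gray--Hervella-type manipulation. Pairing (2) with $\cJ u$ and expanding by $\langle\cJ a,\cJ b\rangle=\langle a,b\rangle-\eta(a)\eta(b)$ converts the two terms with an outer $\cJ$ into $\langle(\nabla_w\cJ)v,u\rangle$ and $\langle(\nabla_v\cJ)w,u\rangle$ up to $\eta$-corrections. The remaining two terms I would reduce using skew-adjointness of $\nabla_X\cJ$ and the derivative of $\cJ^2=-I+\eta\otimes\xi$, namely $(\nabla_X\cJ)\cJ Y+\cJ(\nabla_X\cJ)Y=(\nabla_X\eta)(Y)\xi+\eta(Y)\nabla_X\xi$, together with the cyclic identity (3). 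These combine so that every derivative regroups into a single multiple of $\langle(\nabla_u\cJ)v,w\rangle$, while the leftover $\eta$- and $\nabla_X\xi$-terms collapse, via $\langle X,\cJ Y\rangle=d\eta(X,Y)$, into exactly $d\eta(\cJ v,u)\eta(w)-d\eta(\cJ w,u)\eta(v)$. Finally (5) follows from (4) by setting $u=\xi$: then $\cJ\xi=0$ annihilates the Nijenhuis term and $d\eta(\cJ v,\xi)=d\eta(\cJ w,\xi)=0$ by $d\eta(\xi,\cdot)=0$, so $\langle(\nabla_\xi\cJ)v,w\rangle=0$ for all $v,w$, i.e. $\nabla_\xi\cJ=0$.

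The hard part will be (4): the obstacle is entirely bookkeeping, namely correctly tracking the contact correction terms proportional to $\eta$ and to $\nabla_X\xi$ that arise because $\cJ$ is only an almost complex structure on $\ker\eta$ rather than on all of $TM$. In the Hermitian case these corrections are absent and (4) reduces to the classical $2\langle(\nabla_X\cJ)Y,Z\rangle=\langle[\cJ,\cJ](Y,Z),\cJ X\rangle$; the work here is verifying that the extra terms assemble precisely into the stated $d\eta$-expression. I would therefore first pin down the explicit formula for $\nabla_X\xi$ in terms of $\cJ$ (and the tensor $h$), since it governs the $\eta(Y)\nabla_X\xi$ contributions that must cancel against the $\eta$-corrections coming from the other terms.
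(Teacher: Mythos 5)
Your argument is correct, but it is worth noting that the paper does not actually prove Proposition \ref{J} at all: the appendix simply defers to \cite{Bl} (Blair's Lemma 6.1 and its corollaries, with constants adjusted for the convention $d\eta(X,Y)=\langle X,\cJ Y\rangle$ without the factor $\tfrac12$). So any self-contained derivation is ``a different route'' from the paper. Compared with Blair's treatment, which first establishes a general identity valid on arbitrary almost contact metric structures by a lengthy Koszul expansion and then specializes, your derivation is leaner because you exploit from the outset that the fundamental $2$-form $\Phi(X,Y)=\langle X,\cJ Y\rangle$ coincides with the exact form $d\eta$ on all of $TM$, so that the cyclic identity (3) is available immediately. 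Items (1), (2), (3) and (5) are complete as written, and I checked that your plan for (4) does close up: pairing (2) with $\cJ u$, expanding the two outer-$\cJ$ terms via $\langle\cJ a,\cJ b\rangle=\langle a,b\rangle-\eta(a)\eta(b)$ and (3) produces one copy of $\langle(\nabla_u\cJ)v,w\rangle$; applying (3) once more to the pair $(\nabla_{\cJ v}\cJ,\nabla_{\cJ w}\cJ)$ and then the derivative of $\cJ^2=-I+\eta\otimes\xi$ produces the second copy; and every leftover term is of the form $(\nabla_a\eta)(\cJ b)-(\nabla_{\cJ b}\eta)(a)=d\eta(a,\cJ b)=\langle a,\cJ^2 b\rangle$, which assembles into exactly $\langle u,v\rangle\eta(w)-\langle u,w\rangle\eta(v)=d\eta(\cJ v,u)\eta(w)-d\eta(\cJ w,u)\eta(v)$, with the $\eta(u)$-terms cancelling outright. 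One simplification to your plan: you do not need the explicit formula $\nabla_X\xi=-\tfrac12(\cJ X+\cJ hX)$ (Proposition \ref{h}(4)), only the antisymmetrization $(\nabla_a\eta)(b)-(\nabla_b\eta)(a)=d\eta(a,b)$; this is preferable, since the usual derivation of the $\nabla\xi$ formula itself relies on parts of Proposition \ref{J}, and invoking it here would risk circularity.
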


\begin{prop}\label{h}
Assume that $M$ is of dimension $2n+1$. Let $h=\LD_\xi \cJ$. The followings hold for all $u,v,w$ in $TM$.
\begin{enumerate}
\item $\LD_{\cJ u}\eta(v)=\LD_{\cJ v}\eta(u)$,
\item $h\xi=0$,
\item $\left<hu, v\right>=\left<hv, u\right>$,
\item $\nabla_u\xi=-\frac{1}{2}(\cJ u+\cJ hu)$,
\item $\cJ h+h\cJ=0$,
\item $\tr(h)=0$,
\item $\cJ\Rm(\xi,u)\xi=\frac{1}{2}(\nabla_\xi h)u-\frac{1}{4}\cJ u+\frac{1}{4}h^2\cJ u$,
\item $\Rm(\xi, u)\xi -\cJ\Rm(\xi, \cJ u)\xi=\frac{1}{2}\cJ^2u +\frac{1}{2}h^2u$,
\item $\ric(\xi,\xi)=\frac{n}{2} -\frac{1}{4}\tr(h^2)$.
\end{enumerate}
\end{prop}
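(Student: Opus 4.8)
The nine identities split into the algebraic contact-metric identities (1)--(6) and the curvature identities (7)--(9); all of them follow from the structure equations together with Proposition \ref{J}, and I would organize everything around a single expression for $h$ in terms of the connection. The plan is to start from Cartan's formula $\LD_\xi\eta=\iota_\xi d\eta+d\,\iota_\xi\eta=0$ (valid since $d\eta(\xi,\cdot)=0$ and $\eta(\xi)=1$) and from the Lie-derivative form $hX=[\xi,\cJ X]-\cJ[\xi,X]$. Rewriting the brackets through the torsion-free connection and invoking $\nabla_\xi\cJ=0$ (Proposition \ref{J}(5)) yields the working identity $hX=\cJ\nabla_X\xi-\nabla_{\cJ X}\xi$. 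From here (2) is immediate on setting $X=\xi$ and using $\cJ\xi=0$, $\nabla_\xi\xi=0$; identity (5) follows from $h\cJ X+\cJ hX=[\xi,\cJ^2X]-\cJ^2[\xi,X]$ after substituting $\cJ^2=-I+\eta\otimes\xi$ and killing the leftover term with $\LD_\xi\eta=0$; and (1) is the direct compatibility computation $(\LD_{\cJ u}\eta)(v)=d\eta(\cJ u,v)=\langle\cJ u,\cJ v\rangle=\langle u,v\rangle-\eta(u)\eta(v)$, which is manifestly symmetric in $u,v$.

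The core is (3)--(4). Writing $Pu:=\nabla_u\xi$, the relation $d\eta(X,Y)=\langle\nabla_X\xi,Y\rangle-\langle\nabla_Y\xi,X\rangle$ together with $d\eta=\langle\cdot,\cJ\cdot\rangle$ identifies the antisymmetric part of $P$ as $-\tfrac12\cJ$, so $P=-\tfrac12\cJ+S$ with $S$ symmetric and $S\xi=0$. Feeding this into the working identity gives $h=\cJ S-S\cJ$, from which (3) is automatic since $h^{T}=S^{T}\cJ^{T}-\cJ^{T}S^{T}=-S\cJ+\cJ S=h$ (using $\cJ^{T}=-\cJ$). To pin down $S$ I would apply Proposition \ref{J}(3) with $w=\xi$: the $\nabla_\xi\cJ$ term drops, $(\nabla_u\cJ)\xi=-\cJ\nabla_u\xi$, and one finds that $u\mapsto\cJ\nabla_u\xi=\cJ Pu$ is a symmetric operator. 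This forces $P^{T}\cJ=-\cJ P$, and substituting $P=-\tfrac12\cJ+S$ gives $S\cJ+\cJ S=0$. With $S$ anticommuting with $\cJ$ one gets $h=2\cJ S$ on $\ker\eta$, hence $S=-\tfrac12\cJ h$, which is exactly (4); and $\tr h=2\tr(\cJ S)=0$ because the trace of a product of a skew and a symmetric operator vanishes, giving (6).

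For the curvature identities I would differentiate (4). Since $\nabla_\xi\xi=0$, a standard manipulation gives $\Rm(\xi,u)\xi=(\nabla_\xi P)u+P^2u$; using $\nabla_\xi\cJ=0$ one computes $\nabla_\xi P=-\tfrac12\cJ(\nabla_\xi h)$, while $\cJ h=-h\cJ$ gives $P^2=\tfrac14(h^2-I)$ on $\ker\eta$. Applying $\cJ$ and using $\cJ h^2=h^2\cJ$ produces (7). Identity (8) then follows by combining the expressions for $\Rm(\xi,u)\xi$ and $\cJ\Rm(\xi,\cJ u)\xi$: the derivative terms cancel exactly because $\cJ(\nabla_\xi h)+(\nabla_\xi h)\cJ=\nabla_\xi(\cJ h+h\cJ)=0$ by (5). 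Finally (9) comes from tracing $\Rm(\xi,u)\xi$ over an orthonormal basis of $\ker\eta$: the $\nabla_\xi h$ contribution is $\tr(\cJ\,\nabla_\xi h)=0$ (skew times symmetric), and $\tfrac14\tr(h^2-I)=\tfrac14\tr(h^2)-\tfrac{n}{2}$ gives $\ric(\xi,\xi)=\tfrac{n}{2}-\tfrac14\tr(h^2)$ once the paper's curvature-sign convention is fixed.

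The step I expect to be the main obstacle is (4): the almost-contact and metric compatibility identities by themselves deliver only $h=\cJ S-S\cJ$, which underdetermines the symmetric part $S$, so the genuine contact information---encoded in the closedness of $d\eta$ and surfacing through the cyclic identity Proposition \ref{J}(3)---is precisely what is needed to force $S\cJ+\cJ S=0$. The secondary risk is bookkeeping of signs and normalizations (the factor $\tfrac12$ in (4) and the curvature convention in (9)), which differ from \cite{Bl}; I would fix these once at the outset by comparing against a single model computation.
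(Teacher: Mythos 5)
Your proof is correct. Note that the paper does not actually prove Proposition \ref{h}: it is relegated to the appendix with a pointer to \cite{Bl}, the only caveat being the different normalizations ($h=\LD_\xi\cJ$ rather than $\tfrac12\LD_\xi\cJ$, and $d\eta(X,Y)=\langle X,\cJ Y\rangle$ with the unhalved exterior derivative), so there is nothing in the text to compare against beyond the constants. Your route --- decomposing $\nabla\xi=-\tfrac12\cJ+S$, extracting the skew part from $d\eta(X,Y)=\langle\nabla_X\xi,Y\rangle-\langle\nabla_Y\xi,X\rangle$, forcing $S\cJ+\cJ S=0$ via the cyclic identity of Proposition \ref{J}(3) with $w=\xi$, and then differentiating $\nabla\xi$ along $\xi$ for (7)--(9) --- is the standard argument cleanly packaged, and I checked that it reproduces exactly the paper's constants (e.g.\ $P^2=\tfrac14(h^2-I)$ and $\ric(\xi,\xi)=\tfrac n2-\tfrac14\tr(h^2)$, consistent with the curvature-$\tfrac14$ sphere). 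The only cosmetic issues are the reuse of $P$ for $\nabla\xi$ (the paper already uses $P$ for the horizontal projection) and the fact that (7)--(8) are derived for $u\in\ker\eta$; both sides vanish on $\xi$, so this is harmless.
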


\begin{prop}\label{Rm}
The followings hold.
\begin{enumerate}
\item $2\left<(\nabla_v\cJ)w, u\right>+2\left<(\nabla_{\cJ v}\cJ)w, \cJ u\right>\\
=\eta(u)\left<w,v+hv\right>-2\eta(w)\left<u,v\right>+\eta(u)\eta(v)\eta(w)$,
\item $\left<(\nabla_v\cJ)v, \xi\right>=\frac{1}{2}\left<v,v+hv\right>-\left<\xi,v\right>^2$,
\item $\Rm(v,u)\xi=-\frac{1}{2}(\nabla_v\cJ)(u)-\frac{1}{2}\nabla_v(\cJ h)(u)+\frac{1}{2}(\nabla_u\cJ)(v)+\frac{1}{2}\nabla_u(\cJ h)v$,
\item $\left<\Rm(\xi, w)v,u\right>-\left<\Rm(\xi, w)\cJ v,\cJ u\right>\\
+\left<\Rm(\xi, \cJ w) \cJ v,u\right>+\left<\Rm(\xi, \cJ w)v, \cJ u\right>\\
=\frac{1}{2}\eta(u)\left<v,w+hw\right>-\frac{1}{2}\eta(v)\left<u,w+hw\right> +\left<(\nabla_{hw}\cJ)u,v\right>$.
\end{enumerate}
\end{prop}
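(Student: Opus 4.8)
The plan is to treat Proposition \ref{Rm} as a collection of structural identities that all descend from the defining relations of a contact metric structure together with the two facts already recorded: the fundamental formula of Proposition \ref{J}(4), which expresses $\left<(\nabla_u\cJ)v,w\right>$ through the Nijenhuis tensor and $d\eta$, and the relation $\nabla_u\xi=-\frac12(\cJ u+\cJ hu)$ of Proposition \ref{h}(4). I would also use repeatedly the metric compatibility in the form $\left<\cJ X,\cJ Y\right>=\left<X,Y\right>-\eta(X)\eta(Y)$ (valid for all $X,Y$ since $\cJ\xi=0$), the algebraic identity $\cJ^2=-I+\eta\otimes\xi$, and the symmetry $\left<hX,Y\right>=\left<hY,X\right>$ with $h\xi=0$ from Proposition \ref{h}.

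For part (2), I would differentiate the identity $\left<\cJ v,\xi\right>=0$ in the direction $v$. Since $\left<\cJ\nabla_v v,\xi\right>=0$ as well, this leaves $\left<(\nabla_v\cJ)v,\xi\right>=-\left<\cJ v,\nabla_v\xi\right>$, and substituting $\nabla_v\xi=-\frac12(\cJ v+\cJ hv)$ and expanding the two inner products with metric compatibility yields the claim. Part (1) is the more algebraic of the two: I would start from the fundamental formula for $\left<(\nabla_v\cJ)w,u\right>$ and for $\left<(\nabla_{\cJ v}\cJ)w,\cJ u\right>$, add them, and collapse the resulting Nijenhuis contributions using $\cJ^2=-I+\eta\otimes\xi$ together with the $d\eta$ terms; the $h$ enters through $d\eta(\cJ X,Y)$ and the identity $\cJ h+h\cJ=0$ of Proposition \ref{h}(5).

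Part (3) is a direct curvature computation. I would write $\Rm(v,u)\xi=\nabla_v\nabla_u\xi-\nabla_u\nabla_v\xi-\nabla_{[v,u]}\xi$ and insert $\nabla_X\xi=-\frac12(\cJ+\cJ h)X$ everywhere. Applying the Leibniz rule to $\nabla_v\bigl((\cJ+\cJ h)u\bigr)$ splits each second derivative into a $(\nabla\cJ)$ (respectively $\nabla(\cJ h)$) piece plus a term $(\cJ+\cJ h)\nabla_v u$; the latter terms, together with the $\nabla_{[v,u]}\xi$ contribution, cancel by the torsion-free identity $\nabla_v u-\nabla_u v=[v,u]$, leaving exactly the antisymmetrized derivative terms in the stated formula.

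Part (4) is where I expect the real work to be. The strategy is to specialize part (3) with $\xi$ in the first slot and form the particular symmetrized combination of the four curvature terms $\left<\Rm(\xi,w)v,u\right>$, $-\left<\Rm(\xi,w)\cJ v,\cJ u\right>$, and the two $\cJ w$-terms. The $\cJ$-conjugation is designed so that the purely second-order $\nabla(\cJ h)$ contributions pair up and cancel, while the remaining $(\nabla\cJ)$ terms are rewritten using part (1) as the metric and $\eta$ expressions on the right-hand side; the surviving $\left<(\nabla_{hw}\cJ)u,v\right>$ term is what is left after this conversion. The main obstacle will be the bookkeeping: keeping track of the sign pattern produced by the two insertions of $\cJ$ and by $\cJ h+h\cJ=0$, and matching the $h$-weighted terms correctly, so I would organize the computation by separating the $\eta$-free part (the ``K\"ahler-type'' identity) from the $h$- and $\eta$-corrections and verify each group independently.
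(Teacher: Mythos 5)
The paper itself offers no proof of Proposition \ref{Rm}; the appendix only refers the reader to \cite{Bl}, so your sketch has to stand on its own, and it is essentially the standard Blair-style derivation. Parts (2) and (3) are fine: (3) is exactly the expansion of $\Rm(v,u)\xi$ via $\nabla_X\xi=-\tfrac12(\cJ+\cJ h)X$ with the first-order terms cancelling against $\nabla_{[v,u]}\xi$, and (2) follows from differentiating $\eta(\cJ v)=0$. One caveat on (2): carried out, your computation gives $\tfrac12\left<v,v+hv\right>-\tfrac12\left<\xi,v\right>^2$, not the printed $-\left<\xi,v\right>^2$; setting $v=\xi$ (where $\nabla_\xi\cJ=0$ and $h\xi=0$ force the left-hand side to vanish) shows the printed constant is a typo and your version is the consistent one, so you should record that discrepancy rather than assert that the substitution ``yields the claim''. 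The plan for (4) --- convert the four terms to $\left<\Rm(\cdot,\cdot)\xi,\cdot\right>$ by the pair symmetry of $\Rm$, substitute (3), and reduce the $\nabla\cJ$ terms via (1) --- is the standard route and sound, but the cancellation of the genuinely second-order $\nabla h$ contributions in that particular $\cJ$-symmetrized combination is the entire content of the identity and has to be verified, not assumed; only the bookkeeping you defer will confirm that what survives is precisely $\left<(\nabla_{hw}\cJ)u,v\right>$.

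The genuine gap is in part (1). You assert that ``the $h$ enters through $d\eta(\cJ X,Y)$ and the identity $\cJ h+h\cJ=0$'', but $d\eta(\cJ X,Y)=\left<\cJ X,\cJ Y\right>=\left<X,Y\right>-\eta(X)\eta(Y)$ contains no $h$ whatsoever, and $\cJ h+h\cJ=0$ cannot manufacture an $h$ where none is present. If you add the two instances of Proposition \ref{J}(4) and simplify all the $d\eta$ terms using $\cJ^2=-I+\eta\otimes\xi$, what remains to be matched with $\eta(u)\left<w,hv\right>$ is the purely Nijenhuis combination $\left<[\cJ,\cJ](w,u),\cJ v\right>-\left<[\cJ,\cJ](w,\cJ u),v\right>+\eta(v)\,\eta([\cJ,\cJ](w,\cJ u))$; the $h$ must therefore be extracted from the Nijenhuis tensor itself. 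That requires an input your sketch never names: the evaluation of $[\cJ,\cJ]$ with an effective $\xi$-argument in terms of $h=\LD_\xi\cJ$ (equivalently, combining Proposition \ref{J}(2) with $\nabla_\xi\cJ=0$), which is exactly the auxiliary lemma Blair establishes before proving this identity. Without that step the computation stalls, so you should add it explicitly.
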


Let $\TW$ be the generalized Tanaka connection defined by
\[
\TW_uv=\nabla_uv+\frac{1}{2}\eta(u)\cJ v-\eta(v)\nabla_u\xi+\left<\nabla_u\xi,v\right>\xi.
\]
Let $T$ and $\overline{\Rm}$ be the torsion and the curvature of $\TW$, respectively.

\begin{prop}\label{bRm}
The followings hold for all $u,v$ in $TM$ and all $X,Y,Z$ in $\ker\eta$.
\begin{enumerate}
\item $T(u,v)=\frac{1}{2}\eta(v)\cJ hu-\frac{1}{2}\eta(u)\cJ hv+g(u,\cJ v)\xi$,
\item $T(\xi,\cJ v)=-\cJ T(\xi,v)$,
\item $T(X,Y)=g(X,\cJ Y)\xi=d\eta(X,Y)\xi$,
\item $\twR(u,v)\xi=0$,
\item $\twR(X,Y)Z=P\Rm(X,Y)Z+\frac{1}{4}g(\cJ Y+\cJ hY,Z)(\cJ X+\cJ hX)-\frac{1}{4}g(\cJ X+\cJ hX, Z)(\cJ Y+\cJ hY)+\frac{1}{2}g(X,\cJ Y)JZ$,
\item $\twR(X,\xi)Z=P\Rm(X,\xi)Z+\frac{1}{2}P(\nabla_X\cJ)Z$.
\end{enumerate}
\end{prop}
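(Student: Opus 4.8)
The plan is to reduce everything to two structural identities for $\TW$ and then expand the torsion and curvature directly, feeding in $\nabla_u\xi=-\frac12(\cJ u+\cJ hu)$ from Proposition \ref{h} together with $\nabla_\xi\xi=0$ and $\nabla_\xi\cJ=0$ from Proposition \ref{J}. First I would record the two facts that make the rest routine. Since $\cJ\xi=0$, $\eta(\xi)=1$ and $\langle\nabla_u\xi,\xi\rangle=\frac12 u\langle\xi,\xi\rangle=0$, the definition collapses to $\TW_u\xi=\nabla_u\xi-\eta(\xi)\nabla_u\xi+\langle\nabla_u\xi,\xi\rangle\xi=0$, so $\xi$ is $\TW$-parallel; this yields (4) immediately because $\twR(u,v)\xi=\TW_u\TW_v\xi-\TW_v\TW_u\xi-\TW_{[u,v]}\xi=0$. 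Next, for horizontal $Y,Z$ the terms carrying $\eta(Y)$ and $\eta(Z)$ drop, and differentiating $\langle Z,\xi\rangle=0$ gives $\eta(\nabla_Y Z)=-\langle Z,\nabla_Y\xi\rangle=\frac12\langle\cJ Y+\cJ hY,Z\rangle$, so the $\xi$-contribution of $\langle\nabla_Y\xi,Z\rangle\xi$ exactly cancels the vertical part of $\nabla_Y Z$. Hence $\TW_Y Z=P\nabla_Y Z\in\ker\eta$, i.e.\ $\TW$ preserves the contact distribution, and the parallel computation gives $\TW_\xi Z=\nabla_\xi Z+\frac12\cJ Z$ for horizontal $Z$.

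For the torsion I would expand $T(u,v)=\TW_u v-\TW_v u-[u,v]$ and use that $\nabla$ is torsion-free. Substituting $\nabla_u\xi=-\frac12(\cJ u+\cJ hu)$, the block $\frac12(\eta(u)\cJ v-\eta(v)\cJ u)$ combines with $-(\eta(v)\nabla_u\xi-\eta(u)\nabla_v\xi)$ to leave exactly $\frac12\eta(v)\cJ hu-\frac12\eta(u)\cJ hv$; in the $\xi$-component the $\cJ$-terms assemble into $g(u,\cJ v)$ while the $h$-terms cancel because $\langle\cJ hu,v\rangle=\langle\cJ hv,u\rangle$, which follows from $h$ symmetric, $\cJ$ skew, and $\cJ h+h\cJ=0$ in Proposition \ref{h}. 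This proves (1). Then (3) is the specialization $\eta(X)=\eta(Y)=0$ together with $g(X,\cJ Y)=d\eta(X,Y)$, and (2) follows from (1) using $h\xi=0$ and the identity $\cJ h\cJ v=hv$ on $\ker\eta$.

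The two curvature identities (5) and (6) carry the substance. For (5) I would insert $\TW_Y Z=P\nabla_Y Z$ into $\twR(X,Y)Z=\TW_X\TW_Y Z-\TW_Y\TW_X Z-\TW_{[X,Y]}Z$, noting that re-differentiating the vertical part produces the correction $-\eta(\nabla_Y Z)\,P\nabla_X\xi=\frac14\langle\cJ Y+\cJ hY,Z\rangle(\cJ X+\cJ hX)$ (with the swapped term appearing with a minus sign), since $\nabla_X\xi$ is already horizontal. I would then split $[X,Y]=P[X,Y]+\eta([X,Y])\xi$ with $\eta([X,Y])=g(\cJ X,Y)$; the vertical piece contributes $-g(\cJ X,Y)\TW_\xi Z$, whose difference with $g(\cJ X,Y)\nabla_\xi Z$ leaves $\frac12 g(X,\cJ Y)\cJ Z$, and the horizontal remainder collects into $P\Rm(X,Y)Z$ by linearity of $P$, giving the stated formula. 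For (6) I first check $[X,\xi]\in\ker\eta$ (its $\xi$-component vanishes as $\langle\nabla_X\xi,\xi\rangle=\langle\nabla_\xi X,\xi\rangle=0$), so $\TW_{[X,\xi]}Z=P\nabla_{[X,\xi]}Z$; expanding $\TW_X\TW_\xi Z$ and $\TW_\xi\TW_X Z$ via the two structural identities, the terms $\frac12\cJ P\nabla_X Z$ cancel in pairs and the only survivor beyond $P\Rm(X,\xi)Z$ is $\frac12 P(\nabla_X\cJ)Z$, coming from $P\nabla_X(\cJ Z)=P(\nabla_X\cJ)Z+\cJ P\nabla_X Z$.

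The hard part will be the sign-and-projection bookkeeping in (5): one must keep $\nabla_X\xi$ on the horizontal side, compute $\eta(\nabla_Y Z)$ with the correct sign, and apply the skew/symmetry relations for $\cJ$ and $h$ consistently so that the four correction terms land with precisely the coefficients $\pm\frac14$ and $+\frac12$ displayed in the statement. Everything else is a mechanical consequence of $\TW_u\xi=0$ and $\TW_Y Z=P\nabla_Y Z$.
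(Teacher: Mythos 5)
Your proposal is correct: the two structural identities $\TW_u\xi=0$ and $\TW_YZ=P\nabla_YZ$ (with $\TW_\xi Z=\nabla_\xi Z+\tfrac12\cJ Z$) do follow from $\nabla_u\xi=-\tfrac12(\cJ u+\cJ hu)$, and I have checked that the resulting coefficients $\pm\tfrac14$ and $+\tfrac12$ in (5), the cancellation of the $h$-terms in the $\xi$-component of the torsion via $\langle\cJ hu,v\rangle=\langle\cJ hv,u\rangle$, and the sign $\eta([X,Y])=-g(X,\cJ Y)$ all come out as stated. The paper itself supplies no proof of the appendix propositions (it defers to Blair's book, with adjusted constants), so there is no argument to compare against; your direct expansion is the standard verification and fills that gap correctly.
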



\begin{prop}
The followings hold for all $Y$ in $\ker\eta$.
\begin{enumerate}
\item $\overline{\ric}(\xi, Y)=\ric(\xi, Y) =-\frac{1}{2}\left<\di h, JY\right>$,
\item $\overline{\ric}(Y, Y)=\ric(Y,Y)-\left<\Rm(\xi,Y)Y, \xi\right>-\frac{1}{4}|hY|^2+\frac{3}{4}|Y|^2$,
\item $\overline\ric(Y, Z)=\sum_i\left<\Rm(X_i,Y)Z, X_i\right>+\frac{3}{4}\left<Y, Z\right>-\frac{1}{4}\left<hY, hZ\right>$.
\end{enumerate}
\end{prop}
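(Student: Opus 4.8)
The plan is to obtain all three identities by contracting the pointwise expressions for the Tanaka--Webster curvature $\overline{\Rm}$ in terms of the Riemannian curvature $\Rm$ supplied by Proposition \ref{bRm}. Both $\overline{\ric}$ and $\ric$ are traces of the respective curvature operators, and since $\overline{\Rm}(u,v)\xi=0$ and the horizontal frame vectors pair trivially with $\xi$ in the relevant slots, every trace collapses to a sum over a $\cJ$-adapted orthonormal frame $\{X_i\}_{i=1}^{2n}$ of $\ker\eta$ (so that $\{\cJ X_i\}$ is again such a frame). I would fix such a frame once and for all and use repeatedly that $\cJ$ is skew-adjoint, that $h$ is symmetric with $h\cJ+\cJ h=0$ (Proposition \ref{h}), that $\cJ^2=-I$ on $\ker\eta$, and the resolution of the identity $\sum_i\left<W,X_i\right>X_i=PW$ for horizontal $W$.

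For (3) I would contract Proposition \ref{bRm}(5) by setting $X=X_i$, pairing with $X_i$, and summing. The term $P\Rm(X_i,Y)Z$ produces $\sum_i\left<\Rm(X_i,Y)Z,X_i\right>$, the leading term of the claim. The second correction term carries the factor $\sum_i\left<\cJ X_i+\cJ hX_i,X_i\right>=\mathrm{tr}(\cJ)+\mathrm{tr}(\cJ h)$, which vanishes because $\cJ$ is skew and $\mathrm{tr}(\cJ h)=-\mathrm{tr}(h\cJ)=-\mathrm{tr}(\cJ h)$. Applying the resolution of the identity to the remaining two terms and simplifying with $\cJ^2=-I$, $h\cJ=-\cJ h$, and symmetry of $h$ turns $\cJ(\cJ Y+\cJ hY)+\cJ h(\cJ Y+\cJ hY)$ into $-Y+h^2Y$, yielding $\frac14\left<Y,Z\right>-\frac14\left<hY,hZ\right>$ from the third term and $\frac12\left<Y,Z\right>$ from the last. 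Collecting these gives exactly (3). Identity (2) is then the case $Z=Y$ together with the splitting $\ric(Y,Y)=\sum_i\left<\Rm(X_i,Y)Y,X_i\right>+\left<\Rm(\xi,Y)Y,\xi\right>$.

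For (1) I would contract Proposition \ref{bRm}(6), obtaining $\overline{\ric}(\xi,Y)=\sum_i\left<\Rm(X_i,\xi)Y,X_i\right>+\frac12\sum_i\left<(\nabla_{X_i}\cJ)Y,X_i\right>$, where the first sum is $\ric(\xi,Y)$ since the $\xi$--$\xi$ curvature term is absent. The key point is that the second sum vanishes: applying Proposition \ref{Rm}(1) with $u=v=X_i$ and $w=Y$ (all horizontal, so the right-hand side is zero) and summing, the contribution indexed by $\{\cJ X_i\}$ equals the one indexed by $\{X_i\}$ after reindexing, forcing $\sum_i\left<(\nabla_{X_i}\cJ)Y,X_i\right>=0$. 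Hence $\overline{\ric}(\xi,Y)=\ric(\xi,Y)$. For the last equality I would use $\nabla_u\xi=-\frac12(\cJ+\cJ h)u$ from Proposition \ref{h}(4): writing $A=-\frac12(\cJ+\cJ h)$ and using a full orthonormal frame $\{E_a\}=\{X_i\}\cup\{\xi\}$, a standard computation gives $\Rm(E_a,Y)\xi=(\nabla_{E_a}A)Y-(\nabla_Y A)E_a$, whose contraction discards the $\nabla_Y A$ part (its trace is $Y(\mathrm{tr}\,A)=0$) and the $\nabla\cJ$ part (trace zero as above), leaving $\ric(\xi,Y)=\frac12\sum_a\left<(\nabla_{E_a}h)Y,\cJ E_a\right>$, which a symmetrization of $\nabla h$ rewrites as $-\frac12\left<\di h,\cJ Y\right>$.

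The routine part is the bookkeeping of the four correction terms in (3). The genuinely delicate step is the final symmetrization in (1), namely the identity $\sum_a\left<(\nabla_{E_a}h)Y,\cJ E_a\right>=-\left<\di h,\cJ Y\right>$: it hinges on the symmetry of $h$ together with the first structure equations, and getting its sign and normalization right is where I expect to spend the most care.
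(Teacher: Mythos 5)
Your contractions of Proposition \ref{bRm} are the natural route here (the paper itself offers no proof of this proposition, deferring to \cite{Bl}). Parts (3) and (2) check out: tracing \ref{bRm}(5) over a horizontal orthonormal frame, the second correction term dies because $\tr\cJ=\tr(\cJ h)=0$, the third gives $-\frac14\left<(\cJ+\cJ h)^2Y,Z\right>=\frac14\left<Y,Z\right>-\frac14\left<hY,hZ\right>$ via $(\cJ+\cJ h)^2=-I+h^2$ on $\ker\eta$, the fourth gives $\frac12\left<Y,Z\right>$, and the $\xi$-contribution to the trace vanishes because \ref{bRm}(6) is valued in $\ker\eta$. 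The first equality of (1) is also correctly reduced to $\sum_i\left<(\nabla_{X_i}\cJ)Y,X_i\right>=0$, which your reindexing of Proposition \ref{Rm}(1) over the frame $\{\cJ X_i\}$ does establish.

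The gap is the last equality of (1). Your computation correctly lands on $\ric(\xi,Y)=\frac12\sum_a\left<(\nabla_{E_a}h)Y,\cJ E_a\right>$, but the step you describe as ``a symmetrization of $\nabla h$'' does not close it. Symmetry of $\nabla_{E_a}h$ turns the two sides of your target identity into $\left<\sum_a(\nabla_{E_a}h)\cJ E_a,\,Y\right>$ and $\left<\sum_a\cJ(\nabla_{E_a}h)E_a,\,Y\right>$, so what you actually need is the vanishing of the summed \emph{commutator}
\[
\sum_a\bigl[(\nabla_{E_a}h)\cJ-\cJ(\nabla_{E_a}h)\bigr]E_a=0,
\]
whereas differentiating $h\cJ+\cJ h=0$ only controls the \emph{anticommutator}: $(\nabla_uh)\cJ+\cJ(\nabla_uh)=-(\nabla_u\cJ)h-h(\nabla_u\cJ)$. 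Said differently, since $\cJ h$ is symmetric your trace cleanly gives $\ric(\xi,Y)=-\frac12\left<\di(\cJ h),Y\right>$, and rewriting $\di(\cJ h)=\cJ\,\di h+\sum_a(\nabla_{E_a}\cJ)hE_a$ shows that the stated formula is equivalent to the nontrivial identity $\sum_a\left<(\nabla_{E_a}\cJ)hE_a,Y\right>=2\left<\di h,\cJ Y\right>$. This term is not zero for trace reasons (the two sides are different contractions of the 3-tensor $\left<(\nabla_u h)v,w\right>$), and controlling it requires the contact-metric identities for $\nabla\cJ$ (Proposition \ref{J}(3)--(4), Proposition \ref{Rm}(1)--(2)) applied to $h$-twisted arguments. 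This is exactly where the sign and normalization of $-\frac12\left<\di h,\cJ Y\right>$ are decided, and as written your argument asserts rather than proves it.
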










A contact manifold is CR if $J[X,Y]-J[JX,JY]-[JX,Y]-[X,JY]=0$ for all horizontal vector fields $X$ and $Y$. A computation using the first and the second propositions gives

\begin{prop}
A contact metric manifold is CR if and only if
\[
\nabla_u \cJ(v)=\frac{1}{2}\left<u+hu,v\right>\xi-\frac{1}{2}\left<\xi,v\right>(u+hu).
\]
\end{prop}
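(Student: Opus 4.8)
The plan is to reduce the claimed identity to a statement purely about the Nijenhuis tensor $[\cJ,\cJ]$ and then to recognize that statement as the CR condition. Writing the claimed formula as $(\nabla_u\cJ)v=\tfrac{1}{2}\left<u+hu,v\right>\xi-\tfrac{1}{2}\eta(v)(u+hu)$ and pairing it with an arbitrary $w$, I would substitute its right-hand side into Proposition \ref{J}(4), which expresses $2\left<(\nabla_u\cJ)v,w\right>$ in terms of $\left<[\cJ,\cJ](v,w),\cJ u\right>$ and two $d\eta$-terms. Using $d\eta(X_1,X_2)=\left<X_1,\cJ X_2\right>$ together with $\cJ\xi=0$ and $d\eta(\cdot,\xi)=0$ one computes $d\eta(\cJ v,u)=\left<v,u\right>-\eta(v)\eta(u)$, so the two $d\eta$-terms collapse to $\left<v,u\right>\eta(w)-\left<w,u\right>\eta(v)$. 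After cancelling these and invoking the symmetry of $h$ (Proposition \ref{h}(3)), the claimed formula becomes equivalent to the single tensorial equation
\[
\left<[\cJ,\cJ](v,w),\cJ u\right>=\left<u,\eta(w)hv-\eta(v)hw\right>\quad\text{for all }u,v,w.
\]
Since $\cJ u$ ranges over all of $\ker\eta$ as $u$ varies and $h\xi=0$ (Proposition \ref{h}(2)) kills the $\xi$-dependence of the right side, skew-adjointness of $\cJ$ turns this into $P[\cJ,\cJ](v,w)=\cJ\bigl(\eta(w)hv-\eta(v)hw\bigr)$.

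Next I would check that this last equation holds automatically whenever one of $v,w$ equals $\xi$, so that only its horizontal–horizontal part carries genuine information. Indeed, from $\cJ\xi=0$ the Nijenhuis tensor reduces to $[\cJ,\cJ](\xi,w)=\cJ^2[\xi,w]-\cJ[\xi,\cJ w]$, and the defining relation $h=\LD_\xi\cJ$ gives $[\xi,\cJ w]=\cJ[\xi,w]+hw$; substituting and using $\cJ^2=-I+\eta\otimes\xi$ yields $[\cJ,\cJ](\xi,w)=-\cJ hw$, which is exactly the right-hand side above at $v=\xi$. Hence the claimed formula is equivalent to the vanishing of the horizontal Nijenhuis tensor, $P[\cJ,\cJ](X,Y)=0$ for all $X,Y\in\ker\eta$.

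Finally I would identify $P[\cJ,\cJ](X,Y)=0$ with the stated CR condition that $\mathcal C:=\cJ[X,Y]-\cJ[\cJ X,\cJ Y]-[\cJ X,Y]-[X,\cJ Y]$ vanishes. Applying $\cJ$ to $\mathcal C$ and using $\cJ^2=-I+\eta\otimes\xi$ shows $\cJ\mathcal C=P[\cJ,\cJ](X,Y)$; a separate check, again using $d\eta(X_1,X_2)=\left<X_1,\cJ X_2\right>$ and the compatibility $\left<\cJ X,\cJ Y\right>=\left<X,Y\right>$, shows the vertical part of $\mathcal C$ cancels (the two $\xi$-contributions are $\left<X,Y\right>\xi$ and $-\left<X,Y\right>\xi$), so $\mathcal C$ is horizontal and $\mathcal C=0\iff\cJ\mathcal C=0$ by injectivity of $\cJ$ on $\ker\eta$. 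Chaining the equivalences in both directions then proves the proposition.

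The main difficulty is bookkeeping rather than conceptual: one must track the vertical ($\xi$) components carefully throughout — in the collapse of the $d\eta$-terms, in the mixed $v=\xi$ computation, and in verifying that $\mathcal C$ is purely horizontal. The decisive point is the automatic identity $[\cJ,\cJ](\xi,w)=-\cJ hw$ furnished by $h=\LD_\xi\cJ$, since it is precisely this that lets the seemingly stronger equation $P[\cJ,\cJ](v,w)=\cJ(\eta(w)hv-\eta(v)hw)$ reduce to the genuine CR condition on horizontal vectors.
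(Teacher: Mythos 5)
Your proposal is correct and is precisely the computation the paper gestures at (the paper only says ``a computation using the first and the second propositions,'' i.e.\ Proposition \ref{J}(4) together with $h=\LD_\xi\cJ$ and Proposition \ref{h}(2),(3), which are exactly the ingredients you use). The reduction via $d\eta(\cJ v,u)=\left<v,u\right>-\eta(v)\eta(u)$, the automatic identity $[\cJ,\cJ](\xi,w)=-\cJ hw$, and the identification $\cJ\mathcal{C}=P[\cJ,\cJ](X,Y)$ all check out.
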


\smallskip

\end{document}